\documentclass[11pt]{amsart}

\usepackage[T1]{fontenc}
\usepackage[utf8]{inputenc}
\usepackage{lmodern}
\usepackage[margin=1.15in]{geometry}
\usepackage{amsmath,amssymb,mathtools}
\usepackage{enumitem}
\usepackage{microtype}
\usepackage{xcolor}
\usepackage[colorlinks=true,linkcolor=blue!45!black,
  citecolor=blue!45!black,urlcolor=blue!45!black]{hyperref}
\hypersetup{
  pdftitle={Unfriendly partitions of locally finite Borel graphs},
  pdfauthor={Jos\'e de Jes\'us Pelayo-G\'omez}}

\newtheorem{theorem}{Theorem}[section]
\newtheorem{maintheorem}{Theorem}

\newtheorem{maincorollary}[maintheorem]{Corollary}
\newtheorem{proposition}[theorem]{Proposition}
\newtheorem{lemma}[theorem]{Lemma}
\newtheorem{corollary}[theorem]{Corollary}
\theoremstyle{definition}
\newtheorem{definition}[theorem]{Definition}
\theoremstyle{remark}
\newtheorem{remark}[theorem]{Remark}

\newcommand{\N}{\mathbb N}
\newcommand{\two}{\{0,1\}}
\newcommand{\Ezero}{E_0}
\newcommand{\Bad}{\operatorname{Bad}}
\newcommand{\sat}[1]{[#1]_{E_G}}
\newcommand{\ind}{\mathbf 1}
\DeclareMathOperator{\Ch}{Ch}
\DeclareMathOperator{\att}{att}
\DeclareMathOperator{\rt}{root}

\title[Unfriendly partitions of Borel graphs]
{Unfriendly partitions of locally finite Borel graphs}
\author{Jos\'e de Jes\'us Pelayo-G\'omez}
\address{Independent researcher}
\email{pelayuss@gmail.com}
\thanks{ORCID: \href{https://orcid.org/0009-0007-2619-6872}
  {0009-0007-2619-6872}.}
\subjclass[2020]{Primary 03E15; Secondary 05C15, 28A05, 54H05}
\keywords{Unfriendly partition, Borel graph, measurable colouring,
  Baire measurable colouring, hyperfinite equivalence relation}
\date{}

\begin{document}

\begin{abstract}
We answer in the negative the question of Thomas, recorded in
\cite{Co14,CMU20,CT21}, of whether every locally finite Borel graph
admits a Borel unfriendly partition.
Our counterexample has unbounded degree and is closed on a
zero-dimensional Polish space; its connectedness relation is hyperfinite,
and its components are bipartite and one-ended.
Every unfriendly colouring is proper. Together with a parity obstruction,
this rigidity rules out Baire measurable colourings that are unfriendly
on a comeager set, and measurable colourings that are unfriendly almost
everywhere for a quasi-invariant probability of finite average degree.
In the positive direction, a Borel graph of maximum degree at most four
admits a Borel unfriendly colouring whenever each component contains a
cycle or a vertex of degree at most two. This reduces the Borel problem
in maximum degree three to cubic forests and, with a theorem of
Conley--Marks--Unger, gives Baire measurable unfriendly colourings for
all Borel graphs of maximum degree at most four.
\end{abstract}

\maketitle

\section{Introduction}

An \emph{unfriendly colouring} of a locally finite graph $H$ is a map
$c\colon V(H)\to\two$ such that every vertex has at least as many neighbours
of the opposite colour as of its own colour. Equivalently,
\begin{equation}\label{eq:unfriendly}
  |\{w\in N_H(v):c(w)\ne c(v)\}|
  \ \ge\ |\{w\in N_H(v):c(w)=c(v)\}|
  \qquad(v\in V(H)).
\end{equation}
The two colour classes form an \emph{unfriendly partition}. For a colouring
which need not satisfy \eqref{eq:unfriendly} everywhere, write $\Bad_H(c)$
for the set of vertices at which the inequality fails.

Every locally finite graph admits an unfriendly colouring
\cite{AMP90}. Indeed, given a finite set of vertices, take a maximum cut
of the finite subgraph induced by those vertices and their neighbours.
It satisfies \eqref{eq:unfriendly} at each of the specified vertices;
compactness then gives a colouring satisfying all the inequalities.
Thomas asked whether every locally finite Borel graph likewise admits
a Borel unfriendly partition \cite{Co14,CMU20,CT21}. The existence
theorems discussed below give reason to expect a positive answer.
We show that local finiteness alone does not suffice.

\begin{maintheorem}\label{cor:borel}
There is a locally finite Borel graph with no Borel unfriendly partition.
\end{maintheorem}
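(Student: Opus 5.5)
The plan follows the abstract's outline. I will build a closed, locally finite Borel graph $G$ on a zero-dimensional Polish space $X$ with three properties. First, the components are bipartite, one-ended trees or tree-like graphs. Second, every unfriendly colouring of every component is proper. Third, the two proper $2$-colourings of each component are swapped by a parity obstruction that no Borel selection can respect.

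The model is $\Ezero$ on $2^{\N}$. Choose $G$ so that its connectedness relation is $\Ezero$, or a hyperfinite relation Borel bireducible with it, with components arranged as one-ended trees. A standard fact then applies: $\Ezero$-type graphs built from the odometer admit no Borel proper $2$-colouring. A Borel selection of a bipartition class in each component would give a Borel set $A$ meeting each component in one of the two classes. Relative to $\Ezero$, changing one coordinate (a generic local move in the tree) would have to flip or preserve membership in $A$ according to a fixed parity. A generic or ergodicity argument then contradicts this. Concretely, $A$ is Baire measurable. If it is nonmeager, it is comeager in some basic open set. Composing with a coordinate flip $\sigma$ that acts by an odd-length path shift gives $\sigma(A)=X\setminus A$ locally, which contradicts $E_0$-generic ergodicity.

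The key combinatorial step is \emph{rigidity}: every unfriendly colouring of each component is proper. I would use a gadget design. Each vertex $v$ of the underlying tree $T$ (the odometer tree, where level-$n$ vertices are equivalence classes of $E_n$) is replaced by a vertex whose degree is forced to grow. Hanging pendant structures (leaves and small trees) off each vertex forces the colours locally. A leaf must get the colour opposite to its neighbour. A vertex with $k$ leaves and $m$ tree-neighbours then sees at least $k$ opposite neighbours, so the leaves alone make it satisfied and give no constraint. Leaves are therefore the wrong gadget.

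Instead I would use a ``degree-balanced'' design in which every vertex has even degree $2d$ and exactly half its neighbours are forced structure. A cleaner route, which I would try first, is downward induction from the single end. In a one-ended tree every vertex has a finite ``subtree below'' it. I would prove by induction on height that in any unfriendly colouring each vertex disagrees with all its children. To make that induction work, the degrees are chosen so that each vertex has one parent and an odd number of children, with leaves at the bottom. By induction all children are opposite to their own children. A child $w$ with $2j+1$ children and one parent is then satisfied regardless of the parent, so nothing propagates. This shows the base shape must be tuned. I would add, for each vertex, an even-length cycle-free gadget of twins so that each vertex has exactly equal numbers of forced-opposite and free neighbours, plus the parent. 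Then the unfriendly inequality forces the parent to be opposite. Arranging this tuning with unbounded degrees (which the abstract says is necessary) is where I expect the main obstacle. Degrees must grow with height so the parity bookkeeping holds at every level while the graph stays closed and locally finite.

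Once rigidity holds, any Borel unfriendly partition is a Borel proper $2$-colouring, which the parity and ergodicity argument rules out. This proves the theorem.
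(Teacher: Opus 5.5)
Your outline has the right architecture (a tree with a parity obstruction, a gadget making every unfriendly colouring proper, then a Baire argument). However, both substantive steps are left unresolved, and the mechanisms you sketch for them fail.

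\textbf{Rigidity.} This is the main gap. No acyclic design can work. If $\{u,w\}$ is an edge of a tree with $\deg u,\deg w\ge 2$, delete it and properly $2$-colour the two resulting subtrees so that $u$ and $w$ get the same colour. The result is unfriendly, since each of $u,w$ has one same-colour neighbour and at least one opposite one, but it is not proper. So a ``cycle-free gadget'' is excluded from the outset.

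Your balanced count does not force the parent either. With $a$ forced-opposite neighbours, $a$ free ones and the parent, the degree is $2a+1$ and the quota is $a$. The inequality then only requires that \emph{some} free neighbour or the parent be opposite.

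The missing idea is to make the parent a strict majority of every neighbourhood. The paper replaces each tree vertex $v$ by an independent fibre of $m(v)$ twins and joins fibres of adjacent vertices by complete bipartite graphs. It takes $m(v)=1+2\sum_{u\in\Ch(v)}m(u)$, so that $m(P(v))>\sum_{u\in\Ch(v)}m(u)$. Odd fibre sizes, plus an extra leaf at each vertex with an even number of tree neighbours, make all degrees odd. Then every fibre's common neighbourhood has a unique majority colour, so the fibre is monochromatic. That majority is the parent fibre's colour, so each fibre is opposite its parent, directly and without any induction on height. This graph has $4$-cycles, as any rigid example must have cycles. The exponential growth of $m$ is what produces unbounded degrees. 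The paper does not assert that unbounded degree is necessary in general; the bounded-degree question is open.

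\textbf{Parity.} This step rests on a ``standard fact'' that is false as stated. The tree you name, whose level-$n$ vertices are $E_n$-classes, has the Borel proper $2$-colouring given by level modulo $2$. The odometer graph $x\sim x+1$ is properly coloured by $x\mapsto x(0)$. So having connectedness relation $E_0$ is not enough.

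You need a tree in which every edge changes exactly one coordinate. Then path-length parity equals Hamming parity, and every proper colouring $b$ satisfies $b\circ\sigma_j=1-b$ for all $j$. The paper uses the tree on $\{x\in 2^{\N}:x(j)=1\text{ for infinitely many }j\}$ that joins $x$ to the sequence obtained by changing its first $1$ to $0$. It is closed, locally finite and one-ended, and its components are the $E_0$-classes.

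For that tree your Baire argument works. If $A$ is comeager in a cylinder $N_s$, then $\sigma_j$ with $j\ge|s|$ preserves $N_s$ and maps $A$ onto its complement, which would then also be comeager in $N_s$. Finally, you still need to pull a Borel unfriendly colouring of the blown-up graph back along a clopen sheet such as $x\mapsto(x,0,0)$. Rigidity makes that restriction a Borel proper colouring of the base tree, which gives the contradiction.
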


Conley \cite[Remark~2.2 and acknowledgments]{Co14} records the locally
finite question and Thomas's communication of it at MAMLS 2012;
Conley--Tamuz \cite[p.~3 of the arXiv version]{CT21} note that no
locally finite counterexample was known. Kechris and Marks
\cite[Problem~8.7]{KM20} record Weiss's locally countable version,
which already has an elementary category obstruction
(Remark~\ref{rem:flips}). The content of Theorem~\ref{cor:borel}
is local finiteness.

Several positive results impose additional hypotheses on the graph or
the measure. Conley--Tamuz \cite[Theorems~1--3]{CT21} prove existence
almost everywhere for graphs preserving a probability measure of finite
average degree, and for bounded-degree graphs with a quasi-invariant
probability whose edge cocycle is sufficiently close to $1$. They also
obtain Borel unfriendly colourings for bounded-degree graphs of
subexponential growth.
Conley--Marks--Unger \cite[Theorem~1.7]{CMU20} obtain \emph{strongly
unfriendly} colourings, with at most one same-colour neighbour per vertex,
for locally finite acyclic graphs of minimum degree at least two: Baire measurable
ones, and measurable ones under measured hyperfiniteness.

The same example gives both measure and category obstructions, even
when failures on a negligible set are allowed.

\begin{maintheorem}\label{thm:main}
There are a zero-dimensional Polish space $Y$ and a closed, locally finite
graph $G\subseteq Y^2$ with the following properties.
\begin{enumerate}[label=\textup{(\roman*)},leftmargin=*]
\item The connectedness relation $E_G$ is hyperfinite. Every component of
  $G$ is bipartite and has exactly one end. All degrees are odd and at
  least five, and the degrees are unbounded in every component.
\item Every unfriendly colouring of $G$ is a proper two-colouring.
\item No Baire measurable colouring of $G$ is unfriendly on a comeager
  subset of $Y$.
\item There is an atomless, fully supported, $E_G$-quasi-invariant Borel
  probability measure $\lambda$ on $Y$ such that
  \[
    \int_Y \deg_G(y)\,d\lambda(y)<\infty,
  \]
  but no $\lambda$-measurable colouring of $G$ is unfriendly
  $\lambda$-almost everywhere. The relation $E_G$ admits no invariant
  Borel probability measure.
\end{enumerate}
The space $Y$ is an explicitly specified $G_\delta$ subset of
$2^{\N}\times\two\times\N$, with its relative topology.
\end{maintheorem}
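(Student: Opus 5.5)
The plan is to build $G$ as a "tower" over $E_0$ and to derive (iii) and (iv) from (ii) by a parity/flip argument. Let $Y$ consist of triples $(x,i,n)\in 2^{\N}\times\two\times\N$. The coordinate $n$ records a level. The bit $i$, together with the parity of the finitely many bits of $x$ used to reach level $n$, records the bipartition side. Edges join only points whose $x$-coordinates are $E_0$-related and differ in boundedly many bits determined by the levels. Each component is then contained in a single $E_0$-class (times finitely many fibres over each point), so $E_G$ is hyperfinite.

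The component should look like a one-ended tree of levels: a vertex at level $n$ has one "parent" edge toward the end and an even number $2k_n\ge 4$ of "child" edges, so its degree $2k_n+1$ is odd and at least five. We take $k_n\to\infty$, which makes degrees unbounded in every component. To keep the graph bipartite while adding rigidity, we add only edges between levels of opposite parity, so all cycles are even. Closedness of $G$ and the $G_\delta$ description of $Y$ should follow because adjacency is decided by finitely many coordinates at each level, once we remove the points where the level coordinate would escape. One-endedness follows because every finite subtree hangs below a vertex and the only infinite direction is up the parent chain.

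For (ii), fix an unfriendly $c$ and a vertex $v$ of degree $2k+1$, so $v$ has at most $k$ same-coloured neighbours. The aim is to show no edge is monochromatic, and I expect this step to be the \textbf{main obstacle}, since unfriendly colourings of ordinary regular trees need not be proper. My first attempt will be a discharging argument. Put weight on each monochromatic edge and push it down to the children. The extra bipartite gadgets (even cycles linking siblings and cousins) should force a monochromatic edge at level $n$ to create at least a fixed fraction more monochromatic edges in the finite region below it. Because $k_n$ grows, this fraction is enough to make the monochromatic count below a vertex grow faster than the region can absorb. Concretely, I would try to show that a monochromatic edge at level $n$ produces a vertex at a lower level with more than $k_m$ same-coloured neighbours. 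If pure counting fails, I will design the degrees and the gadgets so that the odd degree and the tight inequality $\le k$ make each vertex's same-colour set propagate deterministically, and then argue inductively along the parent chain.

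Given (ii), the rest is the flip argument. A colouring that is unfriendly on an $E_G$-invariant set is a proper $2$-colouring there, so it equals the bipartition side up to a single global flip on each component. Let $\sigma_m$ flip bit $m$ of $x$. This map preserves $E_0$, hence the components, and it moves a point to the opposite side. So $c(\sigma_m y)=1-c(y)$ for all $y$ in the good set. For (iii), the bad set is meager, so its $E_G$-saturation is meager, since $E_G$ is a countable union of graphs of homeomorphisms. Choose a basic open set $N_s$ on which $c$ is comeagerly constant, take $m>|s|$, and note that $\sigma_m$ maps $N_s$ to itself homeomorphically; this gives a contradiction. For (iv), take $\lambda$ to be a product measure on $2^{\N}$ with biased coin weights, times a measure on $\two\times\N$ with weights chosen so that $\sum_n (2k_n+1)\lambda(\text{level } n)<\infty$. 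The measure is quasi-invariant because the bit flips have bounded Radon--Nikodym derivatives on cylinders. Lebesgue density on a cylinder $N_s$ then yields the same contradiction as in (iii). Finally, $E_G$ has no invariant probability measure: an invariant measure would have to be compatible with the level structure, which pushes mass toward infinitely high levels. Equivalently, the end gives a compressible "parent" map that sends each parent's children injectively upward, and a mass-transport count contradicts finiteness of the measure.
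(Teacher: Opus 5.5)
Your proposal has a genuine gap at (ii). You correctly identify it as the crux, but you give no mechanism for it: the discharging heuristic and the siblings/cousins ``gadgets'' are left unspecified.

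The skeleton itself also works against you.
\begin{itemize}
\item \emph{It is not one-ended.} If every vertex has one parent and $2k_n\ge4$ children, every vertex has infinitely many descendants. Removing any parent edge then leaves two infinite sides. So the tree has infinitely many ends, and your claim that every finite subtree hangs below a vertex fails.
\item \emph{It has non-proper unfriendly colourings.} Let each vertex agree in colour with exactly one designated child and disagree with its other children. Every vertex then has at most two same-coloured neighbours, which is at most $k_n$. The designated edges are monochromatic.
\end{itemize}
So your discharging claim, that a monochromatic edge forces a vertex below it with more than $k_m$ same-coloured neighbours, is false on the skeleton. Any rigidity would have to come entirely from the unspecified gadgets.

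The idea you are missing is the paper's fibre replacement.
\begin{itemize}
\item Start from a one-ended forest $F$ with all degrees odd: the tree $T$, with a leaf attached wherever $k(x)$ is odd. It has leaves, which one-endedness requires.
\item Replace each vertex $v$ by an independent set $Y_v$ of $m(v)$ twins, and join adjacent fibres completely.
\item Twins share one neighbourhood of odd size, which has a strict majority colour. Unfriendliness forces each twin to take the other colour, so every fibre is monochromatic.
\item The multiplicities $m(v)=1+2\sum_{u\in\Ch_F(v)}m(u)$ make the parent fibre a strict majority of every neighbourhood. Hence each fibre must differ in colour from its parent fibre.
\end{itemize}
This argument is purely local. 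The blow-up also raises all degrees to odd values at least five without creating new ends. The cost is $4$-cycles: $G$ is bipartite but not a tree.

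Your flip argument needs $y$ and $\sigma_m y$ to lie in the same component at odd distance for all large $m$. As you describe it, the side is computed from $i$ and finitely many bits of $x$. Then $\sigma_m$ preserves the side for large $m$. Worse, a locally computable bipartition would itself be a continuous proper colouring, hence unfriendly, contradicting (iii).

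The paper gets the parity by making each base edge flip exactly one coordinate (the first $1$). Path-length parity is then Hamming-distance parity, so $b(\sigma_jx)=1-b(x)$ for any proper colouring $b$. The colour is read off the clopen base sheet $s(X)$, on which the flips act.

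Your argument against invariant probabilities also fails. A parent map on a one-ended tree does not exclude them: $T$ is one-ended and $\Ezero$ preserves $\mu$. Mass transport only says the mean number of children is $1$. The paper instead uses the growth of the multiplicities. An invariant $\nu$ restricts on $s(X)$ to a flip-invariant finite measure, hence to $d\mu$ for some $d>0$. Then $\nu(Y)=d\,\eta(Y)=\infty$, because $\sum_k 2^{-k-1}M_k=\infty$.

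Your Baire step (comeager constancy on a cylinder $N_s$) and your measure step (bounded Radon--Nikodym derivatives of the flips) are sound. They work once (ii) and the parity structure are actually in place.
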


Theorem~\ref{cor:borel} follows from Theorem~\ref{thm:main}\textup{(iii)},
since Borel colourings are Baire measurable.
In \textup{(iv)}, a change of density gives integrability; the absence
of an invariant probability distinguishes the example from \cite[Theorem~1]{CT21}.

With the locally finite question answered negatively by
Theorem~\ref{thm:main}, attention turns to its bounded-degree form
\cite[p.~90]{KM20}, \cite[Problem~9.7]{GV25}. Our next result reduces
this remaining Borel question in maximum degree three to cubic forests
(Corollary~\ref{cor:forest-reduction}).

\begin{maintheorem}\label{thm:low-degree}
Let $H$ be a Borel graph of maximum degree at most four on a standard
Borel space. If each connected component of $H$ contains a cycle or a
vertex of degree at most two, then $H$ admits a Borel unfriendly
colouring.
\end{maintheorem}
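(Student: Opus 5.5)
The plan is to start from a Borel colouring and improve it by local flips, controlled by a potential, and then finish off the residual part of each component using its cycle or low-degree vertex. For a colouring $c$, call an edge \emph{monochromatic} if its endpoints share a colour. A vertex $v$ is bad exactly when more than half of its edges are monochromatic. Flipping a bad vertex strictly lowers the number of monochromatic edges at $v$ and leaves every other edge unchanged. In degree at most four the bad configurations are few: degree $1$ with $1$ monochromatic edge, degree $2$ with $2$, degree $3$ with $2$ or $3$, degree $4$ with $3$ or $4$. Each flip removes at least one monochromatic edge; in degree $4$, for instance, $3\mapsto 1$ or $4\mapsto 0$.

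First I treat the case of finite components, which are smooth. There I choose a Borel selector and take a lexicographically least maximum cut, so that case is done outright; from now on the components are infinite. Next, fix a Borel proper colouring of the distance-$2$ graph $H^{2}$ with finitely many colours, say $k$ of them (Kechris--Solecki--Todorcevic). I then run rounds: in round $n$, flip simultaneously every currently bad vertex whose colour is $n \bmod k$. These vertices are pairwise non-adjacent, so the flips do not interfere, and each one strictly decreases the local count of monochromatic edges. Every edge changes monochromatic status only when one of its endpoints flips. Each flip converts at least one monochromatic edge into a non-monochromatic one, but it may also create monochromatic edges elsewhere, so the process need not converge. The key step is to show that on each component the set of vertices flipped infinitely often is either empty or has very rigid structure.

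To get this, I use a potential argument with respect to a finite window. If $v$ flips infinitely often, then at each of its flips $v$ is bad. So its neighbourhood alternates in a way that forces its neighbours to flip infinitely often as well. Hence the set $F$ of vertices flipped infinitely often is a union of components of $H$. I then show that on a component containing a cycle or a vertex of degree $\le 2$, flipping cannot continue forever. A vertex of degree $\le2$ becomes good after one flip and stays good unless a neighbour flips. A cycle gives a finite set on which one can charge the decrease of monochromatic edges. I expect this step to be the main obstacle. A naive global potential is infinite, so the real work is a local discharging argument. Along a ray of infinitely-flipping vertices, every flip should push a monochromatic edge one step further out, and the idea is that, with maximum degree at most four, a ray starting at a degree-$\le2$ vertex or a cycle cannot sustain this (degree $4$ allows $3\mapsto1$ flips, creating just one new defect, and this is tight). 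What I would try first is to prove that monochromatic edges behave like tokens moving along geodesics away from a source. A cycle or low-degree vertex then acts as a sink that absorbs tokens, and tokens cannot be created.

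After that, I take the pointwise limit colouring, which is Borel on the set of vertices that eventually stabilise. On a component where flipping persists, I would instead use the special vertex directly. Choose Borel-ly a finite set in each such component (the $E_H$-invariant set of components containing a cycle or a vertex of degree $\le2$ is Borel), namely the cycles of minimal length or the low-degree vertices. If this choice is only countable-to-one I use it carefully, since countable finite-set selection in a locally finite graph is Borel. Then I orient the rest of the component by distance to that set, and colour greedily outward layer by layer, using the dichotomy that a non-smooth part can only arise in the exceptional ends. The case where each component has a cycle but the choice is non-smooth (infinitely many cycles, no canonical one) is exactly where the flip dynamics is needed, and there I rely on the token-absorption argument above to show that the limit exists everywhere.
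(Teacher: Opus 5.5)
Your proposal has a genuine gap exactly where you flag the main obstacle. Convergence of the flip process on the non-smooth components is never proved, and the heuristics you offer for it are not correct as stated.

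\textbf{Tokens are created.} A bad vertex of degree $3$ with two monochromatic edges, or of degree $4$ with three, turns a bichromatic edge monochromatic when it flips; you note this yourself earlier. Only the count on the full star of the flipping vertex is guaranteed to drop.

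\textbf{The set $F$ need not be a union of components.} After $v$ flips it is good, and it becomes bad again only when \emph{some} neighbour flips. So you only learn that every vertex of $F$ has a neighbour in $F$.

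\textbf{The attached trees are uncontrolled.} This is the most serious problem. You run the dynamics on all of $H$, including the trees hanging off the part of a component spanned by its cycles and degree-$\le 2$ vertices. Such trees can occur in the non-smooth components, which is exactly where you need the dynamics. A hanging tree whose vertices all have degree $3$ or $4$ contains no sink at all. A Borel initial colouring may put infinitely many monochromatic edges in it. Two of them meeting at a degree-$3$ vertex merge into one on its third edge, so they can keep travelling towards the core, and nothing in your argument bounds the number of arrivals. This inflow also drives the core. Suppose $d(v)=4$ and $v$ has one monochromatic and one bichromatic core edge and two monochromatic tree edges. Then $v$ flips, and the monochromatic core edge merely moves. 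Without a finite quantity that is monotone under the dynamics, no ``local discharging along a ray'' will give pointwise stabilization.

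\textbf{What the paper does instead.}
\begin{enumerate}
\item It calls a finite connected subgraph with at least one edge an \emph{admissible certificate} if all its leaves have degree at most two in $H$. It lets $K$ be the union of their vertex sets. In components with a cycle or two vertices of degree at most two, $K$ is connected and contains every simple path between its vertices. Its complement is a forest of trees, each joined to $K$ by one edge.
\item The dynamics runs only on $J=H\upharpoonright K$. It counts same-colour neighbours in $J$ but uses the quota $\lfloor d(v)/2\rfloor$ from the degree in $H$.
\item For every certificate $Q$, the number of monochromatic edges never increases. A flipping vertex of degree at most two has no opposite-colour neighbour in $J$. A flipping vertex of degree $3$ or $4$ has at most one opposite-colour neighbour in $J$ and degree at least two in $Q$; this is where the bound four is used.
\item Let $Q_v$ be the union of the certificates covering the $J$-star of $v$. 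Its count drops by at least one at every flip of $v$, so $v$ flips at most $|E(Q_v)|$ times. No global potential is needed.
\item The exterior trees are coloured by the parity of the distance to their attachment vertex. Every non-core edge is then bichromatic, so the core inequality with the $H$-quota is the true one.
\item Trees with a single vertex of degree at most two are coloured by parity from that vertex.
\end{enumerate}

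Separately, your smooth-case ``greedy layer-by-layer'' colouring does not guarantee unfriendliness once the component has cycles. A vertex whose neighbours are coloured later can become bad. Choosing the lexicographically least unfriendly colouring relative to a Borel enumeration would repair that.
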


The degrees in this statement are those of the original graph. There
is no growth assumption and no bound on the distances between cycles
or vertices of degree at most two. In maximum degree three, the only
components left untreated are cubic trees; in maximum degree four,
they are trees with degrees in $\{3,4\}$. Any Borel unfriendly colouring
on these remaining components extends to the whole graph
(Corollary~\ref{cor:forest-reduction}). Combining this reduction with
the Baire measurable forest theorem of Conley--Marks--Unger yields the
following consequence.

\begin{maincorollary}\label{cor:low-degree-baire}
Every Borel graph of maximum degree at most four on a Polish space
admits a Baire measurable unfriendly colouring.
\end{maincorollary}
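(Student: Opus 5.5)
We split the vertex set into a Borel $E_H$-invariant part $A$, and its complement. $A$ is the union of the components that contain a cycle or a vertex of degree at most two. On the complement, every component is a tree with all degrees in $\{3,4\}$ (degrees $0,1,2$ are excluded by hypothesis). The set $A$ is Borel: whether the component of $x$ contains a cycle or a low-degree vertex is witnessed by a finite configuration within finite distance. On the complement we invoke the Baire measurable forest theorem of Conley--Marks--Unger \cite[Theorem~1.7]{CMU20}. Those components are acyclic, locally finite and of minimum degree at least $3\ge 2$, so there is a strongly unfriendly colouring $c_1$ that is Baire measurable, and a strongly unfriendly colouring is unfriendly. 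Since no edges cross between $E_H$-invariant sets, any colouring of $A$ can be combined with $c_1$, and the union is unfriendly wherever each piece is.

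It remains to find a Baire measurable unfriendly colouring of $H\restriction A$. This is exactly Theorem~\ref{thm:low-degree} applied to the Borel graph $H\restriction A$ on the standard Borel space $A$. That graph has maximum degree at most four, and each of its components contains a cycle or a vertex of degree at most two. We work with $H\restriction A$ rather than with $H$ because Theorem~\ref{thm:low-degree} only requires the hypothesis on the components of the graph it is applied to. The resulting Borel colouring $c_0$ is in particular Baire measurable. Then $c=c_0\cup c_1$ is Baire measurable, since $A$ is Borel. It is unfriendly at every vertex, because the neighbours of a vertex lie in its own component and hence on its own side of the partition.

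The main point to check is the exact form of \cite[Theorem~1.7]{CMU20}. We need to confirm it gives a colouring that is strongly unfriendly at every vertex on a Borel invariant set, after restricting to a comeager invariant set where needed, rather than only on a comeager set. If it only yields a colouring that is correct on an invariant comeager Borel set $C$, we proceed in two steps. First, we shrink $C$ to a Borel $E_H$-invariant comeager set; this is possible since $E_H$ is a countable Borel equivalence relation and saturations of meager sets are meager. Second, we note that the leftover set is meager, so any Borel colouring there is Baire measurable. On the leftover set we still need an unfriendly colouring that is merely Baire measurable. For that we can use the $\sigma$-algebra generated by the analytic sets, or more simply an arbitrary unfriendly colouring chosen by compactness on the meager invariant leftover: any function on a meager set is Baire measurable. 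With this, the combined colouring is genuinely unfriendly everywhere and Baire measurable.
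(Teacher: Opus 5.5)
\textbf{There is a genuine gap in the step where you invoke \cite[Theorem~1.7]{CMU20} on the complement of $A$.} Your decomposition is otherwise the paper's: split off the invariant set $R=W\setminus A$ of cubic-or-$\{3,4\}$ trees, colour $A$ by Theorem~\ref{thm:low-degree}, and use Conley--Marks--Unger on $R$.

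The cited theorem concerns an acyclic Borel graph of minimum degree at least two on a Polish space, and its Baire measurability refers to that topology. Your $R$ is only a Borel subset of $W$. Its relative topology need not be Polish, since a subspace of a Polish space is Polish exactly when it is $G_\delta$. If you instead give $R$ some other compatible Polish topology, the Baire property changes. Pull back the topology along a Borel automorphism that carries a comeager set onto a meager one: the comeager set becomes meager, and all of its subsets acquire the new Baire property. So a colouring $c_1$ that is Baire measurable for a re-topologized $R$ need not have the Baire property in $W$. That is exactly what you need for $c=c_0\cup c_1$. You also cannot simply regard $H\upharpoonright R$ as a graph on $W$: every vertex of $A$ would then be isolated, violating the minimum-degree hypothesis. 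Nor can you always place an acyclic graph of minimum degree two on $A$ itself, for instance when $A$ is finite.

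\textbf{The missing idea is to pad the graph rather than change the topology.} The paper works on the Polish space $W\times\mathbb Z$. It puts a copy of $H\upharpoonright R$ on $R\times\{0\}$, and adds a bi-infinite path on $\{x\}\times(\mathbb Z\setminus\{0\})$ for each $x\in R$ and on $\{x\}\times\mathbb Z$ for each $x\notin R$. The result is a Borel acyclic graph with all degrees in $\{2,3,4\}$, so \cite[Theorem~1.7]{CMU20} applies. Restricting to the clopen slice $W\times\{0\}$, which is homeomorphic to $W$, gives a map with the Baire property in the original topology of $W$.

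\textbf{Your fallback contains a false claim, though you do not need it.} Saturations of meager sets need not be meager for an arbitrary countable Borel equivalence relation on a Polish space: a Borel involution can exchange a meager Borel set with a comeager one. The paper has this property in Section~\ref{sec:regularity} only because $E_G$ there is covered by partial homeomorphisms. If the colouring is correct on an invariant comeager Borel set, it already suffices to colour the invariant meager complement by an arbitrary unfriendly colouring, as you say.
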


The counterexample starts with a tree whose proper two-colourings have
a parity obstruction to regularity. Adding leaves and replacing vertices
by finite fibres forces every unfriendly colouring to induce a proper
colouring of that tree. For Theorem~\ref{thm:low-degree}, finite subgraphs
serve as certificates for Borel recolouring: their monochromatic-edge
counts never increase, and each vertex has a certificate whose count
decreases whenever it flips. This proves pointwise stabilization on
the core; attached trees are then coloured by parity.

Sections~\ref{sec:base}--\ref{sec:regularity} construct the counterexample
and prove its regularity obstructions. Section~\ref{sec:low-degree}
proves the positive theorem and its consequences, including a measurable
version under hyperfiniteness. Section~\ref{sec:scope} discusses the
remaining questions and an explicit sequence of approximate solutions.

\subsection*{Conventions}

All graphs below are simple and undirected; as relations, they are
irreflexive and symmetric. For a graph $H$, write $E_H$ for its
connectedness relation and $N_H(v)$ for the neighbourhood of $v$.
We put $\N=\{0,1,2,\ldots\}$. Measurability
with respect to a Borel measure always allows its completion. A map into
$\two$ is \emph{Baire measurable} if its colour classes have the Baire
property. An equivalence relation is \emph{hyperfinite} if it is an
increasing union of Borel equivalence relations with finite classes.
A measure is \emph{quasi-invariant} for a countable Borel equivalence
relation if the saturation of every null Borel set is null. It is
\emph{invariant} if every Borel bijection between Borel subsets whose
graph lies in the relation preserves measure.

\begin{remark}[The locally countable formulation]\label{rem:flips}
On $2^{\N}$, join $x$ to $\sigma_jx$, obtained by flipping coordinate $j$,
for every $j$. Given a Baire measurable colouring $c$, choose a comeager
set $C$ on which $c$ is continuous. On the comeager set
$C\cap\bigcap_j\sigma_j^{-1}(C)$, convergence $\sigma_jx\to x$ implies
$c(\sigma_jx)=c(x)$ for all sufficiently large $j$. Each such vertex has
infinitely many same-colour neighbours and only finitely many
opposite-colour neighbours, so the cardinal inequality defining an
unfriendly colouring fails. This graph has countably infinite degree
at every vertex.
\end{remark}

\section{An explicit tree and its parity obstruction}\label{sec:base}

Let
\[
 X=\{x\in 2^{\N}:x(j)=1\text{ for infinitely many }j\},
\]
with the relative topology of Cantor space and the fair Bernoulli
probability measure $\mu$. The set $X$ is an invariant, conull
$G_\delta$ for the relation $\Ezero$ of eventual equality. For a finite
set $A\subseteq\N$, let $\sigma_A$ flip precisely the coordinates in $A$;
write $\sigma_j=\sigma_{\{j\}}$. These maps are measure-preserving
homeomorphisms of $X$, and $\sigma_jx\to x$ as $j\to\infty$.

For $x\in X$, define
\[
 k(x)=\min\{j:x(j)=1\},\qquad p(x)=\sigma_{k(x)}x.
\]
Thus $p$ changes the first $1$ of $x$ to $0$. Let $T$ be the graph on $X$
whose edges are the pairs $\{x,p(x)\}$.
We exclude finite-support sequences because their class has the
parentless root $0^\infty$, with infinitely many children; on $X$ every
vertex has a parent and only finitely many children.

\begin{lemma}\label{lem:tree}
The graph $T$ is closed and locally finite. Its components are exactly
the $\Ezero$-classes in $X$, and each component is a one-ended tree.
If $x=0^k1y$, then
\[
 \deg_T(x)=k+1,\qquad
 \Ch_T(x)=\{\sigma_jx:j<k\},\qquad
 D_T(x)=\{z1y:z\in 2^k\},
\]
where $\Ch_T(x)=p^{-1}(x)$ and $D_T(x)$ is the set of descendants of $x$,
including $x$ itself.
\end{lemma}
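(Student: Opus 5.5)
We first compute children and descendants directly from the definition of $p$, then read off the degree, and finally derive the global tree structure from the fact that $p$ strictly increases a rank function. Write $x=0^k1y$, so $k(x)=k$ and $p(x)=0^{k+1}y$.

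\emph{Children.} A point $w$ satisfies $p(w)=x$ exactly when $w$ is obtained from $x$ by turning some $0$ into a $1$ that becomes the first $1$ of $w$. That $0$ must lie in a position $j<k$, since only those zeros precede the first $1$ of $x$. Hence $\Ch_T(x)=\{\sigma_jx:j<k\}$. Each such $w$ lies in $X$, because it has the same tail as $x$.

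\emph{Degree.} The neighbours of $x$ are its $k$ children together with the single vertex $p(x)$. We have $p(x)\ne x$, and $p(x)$ is not a child of $x$: every child satisfies $k(\sigma_jx)=j<k$, whereas $k(p(x))>k$. So $\deg_T(x)=k+1$, and in particular $T$ is locally finite.

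\emph{Descendants.} We argue by induction on $k$ that $D_T(x)=\{z1y:z\in 2^k\}$. For $k=0$ the vertex $x$ has no children, so $D_T(x)=\{x\}$. For the step, note that the child $\sigma_jx=0^j1\,(0^{k-j-1}1y)$ has, by induction, descendant set $\{z'1\,0^{k-j-1}1y:z'\in2^j\}$. Letting $j$ range over all $j<k$ and adding $x$ itself partitions $2^k$ according to the position of the last $1$ in $z$, with $z=0^k$ corresponding to $x$.

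\emph{Components and one end.} The function $k$ strictly increases along $p$, and every vertex has exactly one parent. Thus $T$ is the graph of a function with no periodic points, which makes it acyclic: a cycle would have to contain a vertex whose two cycle-neighbours are both its children, but then following parents from those two children would produce a second path back, contradicting uniqueness of parents.

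Since $p$ and $\sigma_j$ preserve eventual equality, components lie inside $\Ezero$-classes. Conversely, if $x\,\Ezero\,x'$ with both in $X$, choose $n$ such that $x$ and $x'$ agree beyond $n$ and some position $m\ge n$ carries a $1$ (possible since both have infinitely many ones). Let $u=0^m1\,(\text{tail after }m)$. By the descendant formula, both $x$ and $x'$ lie in $D_T(u)$, so they are connected.

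For the end: the ancestors $p^n(x)$ form a ray. Every other ray, after its unique turning point, must eventually follow parents, because descendant sets of finite-$k$ vertices are finite (of size $2^k$). Hence every ray is eventually equal to a tail of the ancestor ray. Equivalently, removing a finite set $F$ leaves exactly one infinite component, since all of $F$ lies in a single finite $D_T(u)$ and the complement of $D_T(u)$ is connected through $p(u)$.

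\emph{Closedness.} This is the step needing care, because $p$ is not continuous on all of $2^{\N}$. On $X$, however, $k$ is locally constant: the clopen set $\{k=k_0\}$ is determined by the first $k_0+1$ coordinates. So $p$ is continuous on $X$, its graph is closed in $X^2$, and $T=\operatorname{graph}(p)\cup\operatorname{graph}(p)^{-1}$ is closed. Here "closed" is taken relative to $X^2$. It is important that the topology is the relative one on $X$; the point $0^\infty$ would be a limit of edges $\{0^j1y,\,0^{j+1}y\}$, but it has been excluded from $X$.
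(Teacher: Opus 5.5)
Your proposal follows the paper's proof essentially step by step. It uses explicit children and descendants, acyclicity from the rank $k$ strictly increasing along $p$, a common ancestor for $\Ezero$-equivalent points, one-endedness from the finiteness of $D_T(u)$, and closedness from the local constancy of $k$ on $X$.

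One sentence does not work as written: the acyclicity argument. A vertex of maximal $k$ on a cycle having both cycle-neighbours as children is not contradictory by itself, since vertices may have many children. Following parents from those two children just returns to that vertex, so it produces no second path. Take instead the vertex of minimal $k$ on the cycle. Its two distinct cycle-neighbours both have larger $k$, so both would have to be its parent, contradicting uniqueness of parents; this is exactly the paper's argument.

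A smaller wording point: in the one-end paragraph there is no ``turning point.'' Once a simple path steps to a child it must keep descending, and finiteness of $D_T$ forbids that. So every ray $r_0r_1r_2\cdots$ satisfies $r_{i+1}=p(r_i)$ for all $i$, and your conclusion follows.
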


\begin{proof}
The descriptions of children and descendants follow by deleting,
successively, the $1$'s before position $k$. In particular,
$|D_T(x)|=2^k$. The first-$1$ position strictly increases under $p$.
Thus an edge joining vertices with different first-$1$ positions must
be the parent edge of the vertex with the smaller position.
If a cycle existed, a vertex on it with minimal first-$1$ position would
have both cycle neighbours as its parent, which is impossible.

Every edge joins eventually equal sequences. Conversely, if $x$ and $x'$
agree from some position $n$ onwards, deleting all their $1$'s before $n$
gives a common ancestor. Hence the components are exactly the
$\Ezero$-classes. The parent iterates of any vertex form an infinite ray.
Removing a parent edge $\{x,p(x)\}$ leaves the finite set $D_T(x)$ on
the side of $x$. Thus every ray eventually follows the parent direction,
and any two parent rays in a component eventually merge. This proves
one-endedness.

Finally, $k$ is locally constant and $p$ is continuous: near a point with
$k(x)=k$, the map $p$ agrees with the fixed homeomorphism $\sigma_k$.
The graph of $p$ is closed in $X^2$, and so is its inverse. Their union is
$T$.
\end{proof}

\begin{lemma}[Parity obstruction]\label{lem:parity}
The graph $T$ has no $\mu$-measurable proper two-colouring, even on an
$\Ezero$-invariant conull Borel subset of $X$. It has no Baire measurable
proper two-colouring, even on an $\Ezero$-invariant comeager Borel subset
of $X$.
\end{lemma}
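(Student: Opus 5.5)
Every edge $\{x,p(x)\}$ of $T$ flips exactly one coordinate. So any proper two-colouring of a component is rigid, and this rigidity forces $c(\sigma_jx)\ne c(x)$ for every $j$. Since $\sigma_jx\to x$, that is incompatible with measurability in either sense. The argument has three steps.

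\emph{Step 1 (rigidity).} Let $x,x'\in X$ be $\Ezero$-equivalent, and let $h(x,x')=|\{i:x(i)\neq x'(i)\}|$ be their (finite) Hamming distance. Along any path in $T$ from $x$ to $x'$, each step changes exactly one coordinate. Hence the path length is congruent to $h(x,x')$ mod $2$. Lemma~\ref{lem:tree} says each component is connected and is an $\Ezero$-class. Therefore any proper two-colouring $c$ of a component must satisfy
\[
 c(x')\equiv c(x)+h(x,x')\pmod 2 .
\]
In particular, if $B\subseteq X$ is $\Ezero$-invariant and $c$ is a proper two-colouring of $T$ restricted to $B$, then $c(\sigma_jx)=1-c(x)$ for all $x\in B$ and all $j\in\N$. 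This uses that $\sigma_jx\in B$, which holds by invariance.

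\emph{Step 2 (measure).} Suppose $B$ is an invariant conull Borel set and $c$ is $\mu$-measurable on $B$; extend $c$ by $0$ off $B$. Each $\sigma_j$ preserves $\mu$ and preserves $B$. By Step 1, $\|c\circ\sigma_j-c\|_{L^1(\mu)}=1$ for every $j$. On the other hand, approximate $c$ in $L^1(\mu)$ within $\varepsilon$ by a function $g$ that depends only on coordinates below some $n$. For $j\ge n$ we have $g\circ\sigma_j=g$, so
\[
 \|c\circ\sigma_j-c\|_1\le \|c\circ\sigma_j-g\circ\sigma_j\|_1+\|g-c\|_1\le 2\varepsilon .
\]
The first term equals $\|c-g\|_1$ because $\sigma_j$ preserves $\mu$. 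Taking $\varepsilon<1/2$ gives a contradiction.

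\emph{Step 3 (category).} Suppose $B$ is invariant comeager Borel and $c$ is Baire measurable on $B$. As in Remark~\ref{rem:flips}, choose a comeager $C\subseteq B$ on which $c$ is continuous. Each $\sigma_j$ is a homeomorphism, so $C'=C\cap\bigcap_j\sigma_j^{-1}(C)$ is comeager, hence nonempty. Pick $x\in C'$. Then $\sigma_jx\to x$ inside $C$, so continuity gives $c(\sigma_jx)=c(x)$ for large $j$, contradicting Step 1.

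The only real content is the parity invariant in Step 1. Steps 2 and 3 are the standard mixing and generic-continuity arguments. The one point needing care is that the domain $B$ is $\Ezero$-invariant, so that each $\sigma_jx$ stays in $B$ and in the same component as $x$.
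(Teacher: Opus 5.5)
Your proposal is correct and follows the paper's proof essentially step by step. Both use the Hamming-parity rigidity $c(\sigma_jx)=1-c(x)$ on invariant sets, then approximate by a function of finitely many coordinates for measure and use generic continuity on $C\cap\bigcap_j\sigma_j^{-1}(C)$ for category. The only differences are cosmetic: you phrase the measure step in $L^1$ rather than via $\mu(b\circ\sigma_j\ne b)\to0$, and you choose $C$ inside $B$ rather than extending the colouring by $0$.
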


\begin{proof}
Each edge of $T$ flips one coordinate. The parity of the length of a path
from $x$ to $x'$ therefore equals the parity of the number of coordinates
on which $x$ and $x'$ differ. Any proper two-colouring $b$ on an invariant
set consequently satisfies
\begin{equation}\label{eq:flip}
 b(\sigma_jx)=1-b(x)\qquad(j\in\N).
\end{equation}

For any $\mu$-measurable map $b\colon X\to\two$, we claim that
\begin{equation}\label{eq:asymptotic}
 \mu\{x:b(\sigma_jx)\ne b(x)\}\longrightarrow0.
\end{equation}
Given $\epsilon>0$, approximate $b$ in measure by a map $g$ depending only
on the first $n$ coordinates, with $\mu(b\ne g)<\epsilon$. Such
approximations exist since the finite-coordinate cylinder sets generate
the Bernoulli sigma-algebra. For $j\ge n$, we have $g\circ\sigma_j=g$,
and measure preservation gives
\[
 \mu(b\circ\sigma_j\ne b)
 \le\mu(b\circ\sigma_j\ne g\circ\sigma_j)+\mu(g\ne b)
 <2\epsilon.
\]
This proves \eqref{eq:asymptotic}, which contradicts \eqref{eq:flip}.
For a colouring on an invariant conull Borel set, extend it by $0$ on
the complement; then \eqref{eq:flip} holds almost everywhere and gives
the same contradiction.

For category, a Baire measurable map $b\colon X\to\two$ is continuous
on a comeager subspace $C$ of $X$. To see this, choose an open set
differing from $b^{-1}(1)$ by a meager set, and remove both that meager
set and the boundary of the open set. The set
\[
 C^*=C\cap\bigcap_{j\in\N}\sigma_j^{-1}(C)
\]
is comeager. For $x\in C^*$, the convergence $\sigma_jx\to x$ and
continuity on $C$ imply $b(\sigma_jx)=b(x)$ for all sufficiently large
$j$. This contradicts \eqref{eq:flip}. A Baire measurable colouring on
an invariant comeager Borel subset extends by $0$ to a Baire measurable
map on $X$; intersecting $C^*$ with that subset proves the stated
relative version.
\end{proof}

\section{Finite replacements forcing proper colourings}\label{sec:construction}

\subsection{Making the base degrees odd}

Put $\epsilon_k=1$ if $k$ is odd and $\epsilon_k=0$ if $k$ is even, and let
\[
 B=\{x\in X:k(x)\text{ is odd}\},\qquad
 Z=(X\times\{0\})\cup(B\times\{1\}).
\]
The set $B$ is clopen in $X$. Define a forest $F$ on $Z$ by putting a copy
of $T$ on $X\times\{0\}$ and adjoining the leaf $(x,1)$ to $(x,0)$ for
every $x\in B$. Give $F$ the parent map
\[
 P(x,0)=(p(x),0),\qquad P(x,1)=(x,0).
\]
The forest is closed, locally finite and one-ended on every component.
Its base vertices have degree $k(x)+1+\epsilon_{k(x)}$, and its leaves
have degree one; thus every degree is odd. Every vertex has finitely
many descendants.

Assign a positive integer multiplicity to each vertex by
\begin{equation}\label{eq:mdef}
 m(x,1)=1,\qquad
 m(x,0)=M_{k(x)},\qquad
 M_k=\frac{3^{k+1}-(-1)^k}{2}.
\end{equation}
Equivalently, $M_0=1$ and
\begin{equation}\label{eq:mrec}
 M_k=1+2\sum_{j<k}M_j+2\epsilon_k.
\end{equation}
Indeed, \eqref{eq:mrec} gives
$M_{k+1}=3M_k+2(-1)^k$, which verifies \eqref{eq:mdef} by induction.
The first values are $1,5,13,41,121$. In particular, every $M_k$ is odd.
Writing $\Ch_F(v)=P^{-1}(v)$, we obtain
\begin{equation}\label{eq:mass}
 m(v)=1+2\sum_{u\in\Ch_F(v)}m(u).
\end{equation}
This includes the case of a leaf. Consequently,
\begin{equation}\label{eq:dominance}
 m(P(v))\ge 1+2m(v)>\sum_{u\in\Ch_F(v)}m(u)
 \qquad(v\in Z).
\end{equation}

\subsection{The replacement graph}

Let
\begin{equation}\label{eq:Y}
 Y=\{(x,t,i):(x,t)\in Z,\ i\in\N,\ i<m(x,t)\}.
\end{equation}
For $v=(x,t)\in Z$, write
$Y_v=\{(x,t,i):i<m(v)\}$ for its replacement fibre. Let
$r\colon Y\to Z$ forget the last coordinate. Define $G$ by
\begin{equation}\label{eq:G}
 y\mathrel G y'\quad\Longleftrightarrow\quad
 r(y)\mathrel F r(y').
\end{equation}
Thus each $F$-edge is replaced by the complete bipartite graph between
its two fibres, and there are no edges within a fibre. Every vertex in
$Y_v$ has the same neighbourhood and the same degree
\begin{equation}\label{eq:degree}
 a(v)=\sum_{w\in N_F(v)}m(w).
\end{equation}
This is a finite odd integer, since both the number of summands and each
summand are odd.

\begin{proposition}[Rigidity]\label{prop:rigidity}
The unfriendly colourings of $G$ are exactly the maps $c=b\circ r$, where
$b\colon Z\to\two$ is a proper two-colouring of $F$. In particular,
every unfriendly colouring of $G$ is proper. The same assertion holds
on any union of connected components.
\end{proposition}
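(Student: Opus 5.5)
The plan is to check the two inclusions separately. The forward direction is immediate. The reverse direction comes from two counting steps: oddness of the degrees forces every fibre to be monochromatic, and the dominance inequality \eqref{eq:dominance} then forces the parent edges to be bichromatic. Every condition involved is local to a vertex and its neighbours, so the same argument works verbatim on any union of connected components. Each such union is a union of full fibres, because $G$-components are preimages under $r$ of $F$-components.

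\emph{Proper colourings lift.} Let $b$ be a proper two-colouring of $F$ and put $c=b\circ r$. By \eqref{eq:G}, every $G$-neighbour $y'$ of a vertex $y\in Y_v$ lies in a fibre $Y_w$ with $w\in N_F(v)$. Then $c(y')=b(w)\ne b(v)=c(y)$. So every vertex has no same-colour neighbours, and $c$ is unfriendly and proper.

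\emph{Fibres are monochromatic.} Let $c$ be unfriendly. For $w\in Z$ and $e\in\two$, write $d_w(e)=|\{y\in Y_w:c(y)=e\}|$, and put $S_v(e)=\sum_{w\in N_F(v)}d_w(e)$. Every vertex of $Y_v$ has exactly the neighbourhood $\bigcup_{w\in N_F(v)}Y_w$. So a vertex $y\in Y_v$ of colour $e$ has $S_v(e)$ same-colour neighbours and $a(v)-S_v(e)$ opposite-colour ones. Since $a(v)$ is odd by \eqref{eq:degree}, the unfriendly inequality at $y$ becomes the strict inequality $S_v(e)<a(v)/2$. Suppose both colours occur in $Y_v$. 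Then $S_v(0)+S_v(1)<a(v)$, contradicting $S_v(0)+S_v(1)=a(v)$. Hence each fibre $Y_v$ is monochromatic, and $c=b\circ r$ for a unique $b\colon Z\to\two$.

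\emph{Parent edges are bichromatic.} With $c=b\circ r$, the inequality at $Y_v$ reads
\[
\sum_{w\in N_F(v),\,b(w)=b(v)}m(w)\;<\;\sum_{w\in N_F(v),\,b(w)\ne b(v)}m(w).
\]
The $F$-neighbours of $v$ are $P(v)$ and the children $\Ch_F(v)$. Suppose $b(P(v))=b(v)$. Then the left side is at least $m(P(v))$. The right side is at most $\sum_{u\in\Ch_F(v)}m(u)$, which is strictly smaller than $m(P(v))$ by \eqref{eq:dominance}. This is a contradiction. Thus $b(P(v))\ne b(v)$ for every $v$. Every edge of $F$ has the form $\{v,P(v)\}$, so $b$ is proper.

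The only delicate point is the strictness in the second step. It uses that all degrees $a(v)$ are odd, which holds because both the number of $F$-neighbours and each multiplicity $m(w)$ are odd. Once that is in place, the rest is bookkeeping with \eqref{eq:dominance}.
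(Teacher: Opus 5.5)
Your proof is correct and follows the paper's argument. Oddness of $a(v)$ forces each fibre to be monochromatic (summing the two strict inequalities is an explicit form of the paper's ``unique majority colour'' remark), and \eqref{eq:dominance} makes the parent fibre a strict majority, so every parent edge is bichromatic; the converse and the extension to unions of components match the paper as well.
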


\begin{proof}
Suppose $c$ is unfriendly. All vertices in a fibre $Y_v$ have identical
neighbourhoods of odd size. Such a neighbourhood has a unique majority
colour. The inequality \eqref{eq:unfriendly} forces every vertex of
$Y_v$ to have the other colour, so the fibre is monochromatic. Let $b(v)$
be its colour.

The neighbourhood of a vertex in $Y_v$ consists of the parent fibre
$Y_{P(v)}$ and all the child fibres. By \eqref{eq:dominance}, the parent
fibre contains strictly more vertices than all the child fibres
together. Its colour is therefore the strict majority colour in that
neighbourhood. Hence $b(v)\ne b(P(v))$ for every $v$, and $b$ is proper
on $F$. Conversely, the lift of a proper two-colouring of $F$ is proper
on $G$, and is thus unfriendly. The proof applies unchanged to a union
of components.
\end{proof}

\begin{proposition}\label{prop:structure}
The space $Y$ and the graph $G$ have all the topological and combinatorial
properties in Theorem~\ref{thm:main}\textup{(i)}. Moreover, the degree
function is locally constant, and the map
\[
 s\colon X\to Y,\qquad s(x)=(x,0,0),
\]
is a homeomorphism onto a clopen subset of $Y$.
\end{proposition}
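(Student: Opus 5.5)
The plan is to verify the listed properties one at a time, transferring each from $T$ (Lemma~\ref{lem:tree}) through the two constructions $T\rightsquigarrow F\rightsquigarrow G$.

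\emph{Topology.} The map $(x,t,i)\mapsto k(x)$ is locally constant on $X\times\two\times\N$. The condition $i<m(x,t)$ depends only on $t$, $i$ and $k(x)$, and $B$ is clopen. Hence $Y$ is a clopen subset of the $G_\delta$ set $X\times\two\times\N$, so it is a zero-dimensional Polish $G_\delta$ in $2^{\N}\times\two\times\N$. The map $r$ is continuous. The relation $F$ is closed in $Z^2$: it is the union of $T\times$(level $0$), the graph of the continuous map $(x,1)\mapsto(x,0)$ on the clopen set $B\times\{1\}$, and their inverses. Therefore $G=(r\times r)^{-1}(F)$ is closed in $Y^2$.

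The degree $a(v)$ in \eqref{eq:degree} depends only on $t$ and $k(x)$, since $N_F(v)$ consists of the parent, of $k$ children $\sigma_j x$ with $j<k$ (each with $k(\sigma_jx)=j$), and possibly a leaf. So the degree is locally constant. The image $s(X)=X\times\{0\}\times\{0\}$ is clopen, and $s$ is evidently a homeomorphism onto it.

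\emph{Combinatorics.} Each $a(v)$ is odd, as already noted. For a base vertex with $k(x)=k$, we have $a(v)\ge M_{k+1}$ from the parent fibre. Since $M_1=5$, $a\ge5$ at every base vertex. A leaf $(x,1)$ has $a=M_{k(x)}$ with $k(x)$ odd, so $a\ge5$ there as well. Unboundedness in each component follows because every $E_0$-class contains points with arbitrarily large $k(x)$.

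Bipartiteness is immediate: $F$ is a forest and $G$ is a blow-up of $F$ with no edges inside a fibre, so the lift of a proper $2$-colouring of $F$ is proper on $G$. The components of $G$ are the sets $r^{-1}(C)$ for $C$ a component of $F$. Here we use that each fibre is nonempty and has no internal edges, but any two vertices of one fibre share a neighbour.

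\emph{One-endedness.} Removing a finite set $S\subseteq Y$ affects only finitely many fibres. The key point, and the step I would check most carefully, is that a fibre only partially removed still leaves the remaining vertices connected through neighbouring fibres. Concretely, I would show that the infinite components of $G\setminus S$ project into the infinite component of $F\setminus r(S)$, which is unique because $F$ is one-ended. Then every finite set $S$ splits off at most one infinite piece.

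\emph{Hyperfiniteness.} Let $E_n$ identify $y,y'$ when $r(y),r(y')$ lie in the same set $D$ of descendants of $(P^n\text{ applied to the base point})$. More explicitly, put $y\,E_n\,y'$ when $x,x'$ agree beyond coordinate $n$. These relations are Borel, their classes are finite (at most $2^{n}\cdot2\cdot$ finitely many fibre indices, since $k\le n$ or $k$ is determined by the common tail), and their union is $E_G$ because $E_T=E_0$ on $X$. The obstacle here is confirming the finiteness of classes: for fixed tail beyond $n$, the values of $k$ are bounded by the first $1$ of that tail, so the multiplicities are bounded.
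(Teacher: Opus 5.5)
Your route is the paper's: $Y$ is clopen in $X\times\two\times\N$, $G=(r\times r)^{-1}(F)$ is closed, the degrees are read off from \eqref{eq:degree}, the bipartition is lifted from $F$, and hyperfiniteness comes from tail agreement. Your explicit $E_n$ is the paper's $R_n$. Keep that definition and drop the description via $P^n$ before it, which does not define the same relation. Two steps, however, are wrong as written.

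\textbf{Degrees.} The degree of a base vertex $(x,0)$ is not a function of $k(x)$ alone. Its parent fibre has $M_\ell$ vertices, where $\ell=k(p(x))$ is the position of the second $1$ of $x$. For example, $x=11\cdots$ and $x=101\cdots$ both have $k(x)=0$, but their degrees are $M_1=5$ and $M_2=13$. Local constancy still holds, because $k$ and $k\circ p$ are both locally constant. Then \eqref{eq:mrec} gives the paper's formula $a(x,0)=M_\ell+(M_k-1)/2$. Your bound $a\ge M_{k+1}$ only uses $\ell>k$, so it is unaffected.

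\textbf{One-endedness.} The inclusion you propose to prove is false: an infinite component of $G\setminus S$ need not project into the infinite component $I$ of $F\setminus r(S)$. Take $v=(x,0)$ with $k(x)\ge1$, so $m(v)\ge5$, and let $S=\{(x,0,0)\}$. The other vertices of $Y_v$ are still joined to all of $Y_{P(v)}$ and to every child fibre, so $G\setminus S$ is connected. Its projection contains $v\in r(S)$ and the finite components of $F\setminus\{v\}$ below $v$.

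What is true, and suffices, is the reverse inclusion: every infinite component $D$ of $G\setminus S$ contains $r^{-1}(I)$. The set $r^{-1}(I)$ consists of whole fibres, misses $S$, and is connected, because fibres over adjacent vertices are completely joined. Since fibres are finite, $r(D)$ is infinite. On the other hand, $r(S)$ together with the finitely many finite components of $F\setminus r(S)$ is finite, by local finiteness. Hence $D$ meets a fibre over $I$, and so $D$ contains $r^{-1}(I)$. Two infinite components must therefore coincide, and $r^{-1}(I)$ itself provides one. This is essentially the paper's argument, which enlarges $S$ to whole fibres and then restores finitely many vertices. Phrased this way, the behaviour of partially removed fibres, which you singled out as the key point, plays no role.
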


\begin{proof}
The function $m$ is locally constant on $Z$. Thus $Y$ is clopen in
$X\times\two\times\N$, and hence is a $G_\delta$ subset of
$2^{\N}\times\two\times\N$. It is zero-dimensional and Polish. The
map $r$ is continuous and $F$ is closed, so \eqref{eq:G} shows that $G$
is closed. The assertion about $s$ follows from $m(x,0)\ge1$.

Local finiteness and odd degrees follow from \eqref{eq:degree}.
If the first two $1$'s of $x$ occur at positions $k<\ell$, then
\begin{equation}\label{eq:explicit-degree}
 a(x,0)=M_\ell+\frac{M_k-1}{2},\qquad
 a(x,1)=M_k\quad(x\in B).
\end{equation}
These formulas also prove local constancy. Since $\ell\ge1$, the first
degree is at least $M_1=5$; the second is at least five because $k$ is
odd. Along the successive parent iterates of any base vertex, the
first-$1$ positions tend to infinity. Formula \eqref{eq:explicit-degree}
then shows that degrees are unbounded in every component.

The bipartition of each tree component of $F$ lifts to $G$. To check
one-endedness, enlarge any finite deletion in a $G$-component to whole
fibres. Deleting their projections from $F$ leaves one infinite component
and finitely many finite ones, since $F$ is locally finite and one-ended.
The infinite component lifts to a connected graph: each fibre is
connected through an adjacent fibre. Only finitely many vertices remain
outside it. Restoring the enlarged deletion outside the original set
still leaves exactly one infinite component, proving one-endedness.

Finally, let $\pi\colon Y\to X$ be $\pi(x,t,i)=x$. Since $F$ has no
isolated vertices, all the vertices over a fixed $x$ belong to one
$G$-component, and
\begin{equation}\label{eq:connectedness}
 y\mathrel{E_G}y'\quad\Longleftrightarrow\quad
 \pi(y)\mathrel{\Ezero}\pi(y').
\end{equation}
Define $R_n$ on $Y$ by
\[
 y\mathrel{R_n}y'\quad\Longleftrightarrow\quad
 \pi(y)(j)=\pi(y')(j)\text{ for every }j\ge n.
\]
Each $R_n$ is a Borel equivalence relation. A class involves at most
$2^n$ base sequences, each supporting finitely many vertices, so it is
finite. The relations increase and their union is $E_G$.
\end{proof}

\section{Category and measure}\label{sec:regularity}

We first describe the partial homeomorphisms which allow negligible
exceptional sets to be removed componentwise. For $t\in\two$ and
$i\in\N$, put
\[
 U_{t,i}=\{x\in X:(x,t,i)\in Y\},\qquad
 S_{t,i}=\{(x,t,i):x\in U_{t,i}\}.
\]
These are clopen in $X$ and $Y$, respectively. For a finite
$A\subseteq\N$ and indices $(t,i),(u,j)$, consider the map
\begin{equation}\label{eq:partial-homeo}
 (x,t,i)\longmapsto(\sigma_Ax,u,j)
 \quad\text{on }x\in U_{t,i}\cap\sigma_A^{-1}(U_{u,j}).
\end{equation}
It is a homeomorphism between clopen subsets of $Y$. There are countably
many such maps, and their graphs cover $E_G$ by
\eqref{eq:connectedness}. In particular, saturations of Borel sets are
Borel, and saturations of meager sets are meager.

\subsection{The Baire obstruction}

\begin{proposition}\label{prop:baire}
No Baire measurable map $c\colon Y\to\two$ is unfriendly on a comeager
set.
\end{proposition}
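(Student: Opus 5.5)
Suppose, for a contradiction, that $c\colon Y\to\two$ is Baire measurable and that $C=Y\setminus\Bad_G(c)$ is comeager. The plan is to pass to an $E_G$-invariant comeager Borel set on which $c$ is genuinely unfriendly, apply Proposition~\ref{prop:rigidity} to get a proper colouring of $F$, pull it back to $T$ along the clopen copy $s(X)$, and contradict Lemma~\ref{lem:parity}.

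First shrink $C$ to a comeager Borel (indeed $G_\delta$) set $C_0\subseteq C$. Its complement is meager, and by the remark after \eqref{eq:partial-homeo} its saturation $M=[Y\setminus C_0]_{E_G}$ is a meager Borel set. Put $W=Y\setminus M$, which is an $E_G$-invariant comeager Borel set. Every vertex of $W$ lies in $C$, so the restriction of $c$ to $W$ is unfriendly on a union of components of $G$. By the final clause of Proposition~\ref{prop:rigidity}, on $W$ we have $c=b\circ r$ for some proper two-colouring $b$ of $F$ on $r(W)$.

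Next transfer this to $T$. Let $X_0=s^{-1}(W)=\{x\in X:(x,0,0)\in W\}$. By Proposition~\ref{prop:structure}, $s$ is a homeomorphism onto a clopen subset of $Y$. Since $W$ is comeager in $Y$, $W\cap s(X)$ is comeager in the open set $s(X)$, and so $X_0$ is comeager in $X$. The set $X_0$ is Borel, and it is $\Ezero$-invariant by \eqref{eq:connectedness} together with the invariance of $W$.

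Define $b'(x)=c(x,0,0)$ for $x\in X_0$. This equals $b(x,0)$, and since $T$-edges $\{x,p(x)\}$ correspond to $F$-edges $\{(x,0),(p(x),0)\}$, the map $b'$ is a proper two-colouring of $T$ on $X_0$. It is Baire measurable, because it is the composition of $c$ with the homeomorphism $s$ restricted to a clopen set, and preimages of sets with the Baire property under such maps keep the Baire property. This contradicts the category half of Lemma~\ref{lem:parity}.

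The main point needing care is the invariance step. The comeager set must be enlarged to a full union of components before Proposition~\ref{prop:rigidity} applies, since unfriendliness only at some vertices of a component does not force properness. That step uses exactly the countable cover of $E_G$ by the partial homeomorphisms \eqref{eq:partial-homeo}, which is already established. The remaining work is checking that comeagerness and the Baire property pass through $s$, which is routine because $s(X)$ is clopen.
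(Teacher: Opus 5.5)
Your proof is correct and follows essentially the same route as the paper: you remove the $E_G$-saturation of a meager Borel set containing the failures to get an invariant comeager Borel set, apply Proposition~\ref{prop:rigidity} there, and pull back along the clopen embedding $s$ to contradict Lemma~\ref{lem:parity}. The extra details you supply, such as identifying $c\circ s$ with $b(\cdot,0)$ and checking that comeagerness and the Baire property pass through $s$, only spell out steps the paper states briefly.
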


\begin{proof}
Suppose otherwise. Choose a meager Borel set $N$ containing all vertices
at which $c$ fails to be unfriendly, and put
$Y_0=Y\setminus\sat N$. The preceding partial homeomorphisms show that
$Y_0$ is an invariant comeager Borel set. The restriction of $c$ is
unfriendly at every vertex of $G\upharpoonright Y_0$, since invariance
ensures that all its neighbours remain in $Y_0$.

Set $X_0=s^{-1}(Y_0)$. The image of $s$ is clopen, so $X_0$ is comeager
in $X$, and $b(x)=c(s(x))$ is Baire measurable on $X_0$. Equation
\eqref{eq:connectedness} makes $X_0$ $\Ezero$-invariant. By
Proposition~\ref{prop:rigidity}, $b$ is a proper two-colouring of
$T\upharpoonright X_0$, contradicting Lemma~\ref{lem:parity}.
\end{proof}

\subsection{A quasi-invariant probability with integrable degrees}

Define a Borel measure on $Y$ by counting the vertices over each base
point:
\begin{equation}\label{eq:eta}
 \eta(A)=\int_X
   \sum_{\substack{t\in\two\colon (x,t)\in Z}}
   \sum_{i<m(x,t)}\ind_A(x,t,i)\,d\mu(x).
\end{equation}
Its restriction to each sheet $S_{t,i}$ is a copy of
$\mu\upharpoonright U_{t,i}$, so it is sigma-finite, atomless and fully
supported. For Borel $A\subseteq Y$,
\begin{equation}\label{eq:null-projection}
 \eta(A)=0\quad\Longleftrightarrow\quad\mu(\pi(A))=0.
\end{equation}
Here $\pi(A)$ is Borel, since it is the countable union of the
projections of $A\cap S_{t,i}$ along homeomorphisms.

\begin{lemma}\label{lem:invariant}
The measure $\eta$ is $E_G$-invariant. In particular, the saturation of
every $\eta$-null Borel set is $\eta$-null.
\end{lemma}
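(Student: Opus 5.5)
The plan is to reduce invariance of $\eta$ to the countably many partial homeomorphisms \eqref{eq:partial-homeo}, whose graphs cover $E_G$. By the definition of invariance in the Conventions, we must show that every Borel bijection $\phi\colon A\to A'$ between Borel subsets of $Y$ with graph contained in $E_G$ preserves $\eta$. First we partition $A$ into countably many Borel pieces $A_n$ such that on each $A_n$ the map $\phi$ agrees with one of the maps $\psi_n$ of the form \eqref{eq:partial-homeo}. To do this, enumerate those maps as $\psi_0,\psi_1,\dots$ and let $A_n$ be the set of $y\in A\setminus\bigcup_{m<n}A_m$ such that $y$ lies in the domain of $\psi_n$ and $\phi(y)=\psi_n(y)$. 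Each $A_n$ is Borel because the $\psi_n$ are continuous on clopen domains. The pieces cover $A$ because $(y,\phi(y))\in E_G$, and by \eqref{eq:connectedness} every such pair is matched by some $\sigma_A$ together with a choice of sheet indices. The sets $\phi(A_n)=\psi_n(A_n)$ are pairwise disjoint and Borel. Countable additivity then reduces the claim to $\eta(\psi(D))=\eta(D)$ for a single map $\psi$ of the form \eqref{eq:partial-homeo} and a Borel set $D$ in its domain.

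For such a $\psi$, taking $(x,t,i)$ to $(\sigma_Ax,u,j)$ on $x\in U_{t,i}\cap\sigma_A^{-1}(U_{u,j})$, we have $D\subseteq S_{t,i}$ and $\psi(D)\subseteq S_{u,j}$. By \eqref{eq:eta}, the restriction of $\eta$ to a sheet $S_{t,i}$ is the pushforward of $\mu\upharpoonright U_{t,i}$ under the homeomorphism $x\mapsto(x,t,i)$. Indeed, for a Borel set contained in one sheet, the inner double sum counts exactly one index. Writing $D=\{(x,t,i):x\in D'\}$, we get $\eta(D)=\mu(D')$ and $\eta(\psi(D))=\mu(\sigma_A D')$. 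These are equal because each $\sigma_A$ is a composition of finitely many coordinate flips, and each flip preserves the fair Bernoulli measure $\mu$. This proves invariance.

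For the ``in particular'' assertion, let $N$ be an $\eta$-null Borel set. Its saturation is $\sat N=\bigcup_n\psi_n(N\cap\operatorname{dom}\psi_n)$, which is Borel, as already noted after \eqref{eq:partial-homeo}. Each term has measure $\eta(N\cap\operatorname{dom}\psi_n)=0$ by the previous paragraph, so countable subadditivity shows that $\sat N$ is null. Alternatively, this follows directly from \eqref{eq:null-projection}: $\pi(\sat N)$ is the $\Ezero$-saturation of $\pi(N)$, a countable union of the sets $\sigma_A\pi(N)$, each of which is $\mu$-null.

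The main obstacle is the bookkeeping in the first step, which converts an arbitrary Borel bijection inside $E_G$ into countably many pieces on which it agrees with the explicit homeomorphisms. The point to verify carefully is that these pieces are Borel and that their images are disjoint; the latter holds because $\phi$ is injective. The measure computation itself is routine once it is localized to single sheets.
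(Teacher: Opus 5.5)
Your proposal is correct and follows essentially the same route as the paper. You partition an arbitrary Borel bijection inside $E_G$ by first agreement with the enumerated maps of \eqref{eq:partial-homeo}, use injectivity for disjoint images and countable additivity, check that each single map preserves $\eta$ because $\eta$ is a copy of $\mu$ on each sheet and finite flips preserve $\mu$, and obtain the null-saturation claim from the countable union of images — you just supply more of the bookkeeping than the paper's brief argument does.
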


\begin{proof}
Each map in \eqref{eq:partial-homeo} preserves $\eta$, because it acts
between sheets by a finite coordinate flip and preserves $\mu$.
Any Borel partial bijection with graph in $E_G$ can be partitioned into
countably many Borel restrictions of these maps: enumerate the maps
and assign a point to the first one which agrees with the bijection
there. The images of those pieces are disjoint. Countable additivity
proves invariance. The null-saturation assertion also follows directly
by taking the countable union of the images in
\eqref{eq:partial-homeo}.
\end{proof}

For an equivalent probability of finite average degree, give each fibre
total weight inversely proportional to one plus its degree. Let
\begin{equation}\label{eq:normalization}
 C=\int_X\sum_{\substack{t\in\two\colon (x,t)\in Z}}
       \frac{1}{1+a(x,t)}\,d\mu(x).
\end{equation}
Then $0<C\le2$. Define
\begin{equation}\label{eq:lambda}
 \lambda(A)=\frac1C\int_X
   \sum_{\substack{t\in\two\colon (x,t)\in Z}}
   \frac{1}{m(x,t)(1+a(x,t))}
   \sum_{i<m(x,t)}\ind_A(x,t,i)\,d\mu(x).
\end{equation}

\begin{proposition}\label{prop:measure}
The measure $\lambda$ is an atomless, fully supported,
$E_G$-quasi-invariant Borel probability measure of finite average degree. No
$\lambda$-measurable colouring of $G$ is unfriendly almost everywhere.
The same nonexistence assertion holds for every Borel probability
measure equivalent to $\lambda$.
\end{proposition}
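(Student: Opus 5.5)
The proof separates into the properties of $\lambda$ and the nonexistence assertion. For the first part, note that $\lambda$ has a density with respect to $\eta$ that is strictly positive and finite everywhere, namely $\frac{1}{C\,m(x,t)(1+a(x,t))}$ on the fibre over $(x,t)$. Hence $\lambda$ and $\eta$ have the same null sets. Atomlessness, full support and quasi-invariance then transfer from $\eta$ via Lemma~\ref{lem:invariant} and the remarks after \eqref{eq:eta}. We need $0<C<\infty$ so that $\lambda$ is a probability. Summing the density over the $m(x,t)$ points of each fibre gives total mass $\frac{1}{C}\int_X\sum_t\frac{1}{1+a(x,t)}\,d\mu=1$. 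The bound $C\le 2$ is immediate because there are at most two values of $t$, and $C>0$ because the integrand is positive. For finite average degree, every vertex of the fibre over $(x,t)$ has degree $a(x,t)$, so
\[
\int_Y\deg_G\,d\lambda=\frac1C\int_X\sum_t\frac{a(x,t)}{1+a(x,t)}\,d\mu\le\frac2C<\infty.
\]
Equivalently, this is the change of density mentioned after Theorem~\ref{thm:main}.

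For the nonexistence assertion, fix any Borel probability $\lambda'$ equivalent to $\lambda$, hence to $\eta$. Suppose $c$ is $\lambda'$-measurable and unfriendly off a $\lambda'$-null set. We may modify $c$ on a null set to assume it is Borel. We then enlarge the failure set to a null Borel set $N$ that contains both the vertices where $c$ fails to be unfriendly and the null set where $c$ was changed. One point needs care here: unfriendliness at $v$ depends on the colours of the neighbours of $v$. Changing $c$ on a null set $A$ may create new failures, but only on the saturation of $A$, which is null by quasi-invariance. So we take $N$ to contain $\sat{A}$ as well.

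Set $Y_0=Y\setminus\sat N$. By Lemma~\ref{lem:invariant} this is an invariant Borel set of full measure, and $c\upharpoonright Y_0$ is unfriendly at every vertex of $G\upharpoonright Y_0$, since $Y_0$ is a union of components. Proposition~\ref{prop:rigidity}, applied on this union of components, shows that $c=b\circ r$ there, with $b$ a proper two-colouring of $F$. Let $X_0=\pi(Y_0)=s^{-1}(Y_0)$; these agree because $Y_0$ is invariant and, by \eqref{eq:connectedness}, all points over a given $x$ lie in one component. By \eqref{eq:connectedness}, $X_0$ is $\Ezero$-invariant. It is also $\mu$-conull by \eqref{eq:null-projection}, because $\pi(\sat N)$ is $\mu$-null. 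The map $x\mapsto c(s(x))$ is then a Borel proper two-colouring of $T\upharpoonright X_0$, contradicting Lemma~\ref{lem:parity}.

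The only delicate step is this passage from "unfriendly almost everywhere" to "unfriendly on an invariant conull set". Quasi-invariance handles it, because every failure and every modification can be absorbed into a null saturated set. The remainder is bookkeeping.
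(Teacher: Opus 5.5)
Your proof is correct and follows essentially the paper's route. You use the positive density to identify null sets with those of $\eta$, saturate a null Borel failure set to get an invariant conull $Y_0$, apply rigidity, pass to the conull invariant set $X_0=s^{-1}(Y_0)$ via \eqref{eq:null-projection}, and invoke Lemma~\ref{lem:parity}. The only difference is that you first replace $c$ by a Borel modification, absorbing the saturation of the modified set into $N$. The paper instead keeps $c$ merely measurable and checks that $c\circ s$ is $\mu$-measurable, because $\lambda$ restricted to the sheet $s(X)$ pulls back to a measure with positive density with respect to $\mu$.
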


\begin{proof}
Summing the density in \eqref{eq:lambda} over a fibre cancels its
multiplicity, so \eqref{eq:normalization} gives $\lambda(Y)=1$.
The density with respect to $\eta$ is everywhere positive and finite.
Thus the two measures have the same null sets. Atomlessness and full
support follow from those of $\eta$, and quasi-invariance follows
from Lemma~\ref{lem:invariant}. Moreover,
\[
 \int_Y \deg_G(y)\,d\lambda(y)
 =\frac1C\int_X\sum_{\substack{t\in\two\colon (x,t)\in Z}}
       \frac{a(x,t)}{1+a(x,t)}\,d\mu(x)
 \le \frac{2}{C}<\infty.
\]

Suppose a $\lambda$-measurable colouring $c$ is unfriendly almost
everywhere. Choose a null Borel set $N$ containing all its failures.
Quasi-invariance makes $Y_0=Y\setminus\sat N$ an invariant conull
Borel set. Put $X_0=s^{-1}(Y_0)$. By equivalence of $\eta$ and
$\lambda$ and \eqref{eq:null-projection}, this is an invariant
$\mu$-conull Borel subset of $X$.

The restriction of $\lambda$ to the clopen sheet $s(X)$ pulls back
under $s$ to a measure with strictly positive density
\[
 \frac{1}{C M_{k(x)}(1+a(x,0))}
\]
with respect to $\mu$. Consequently, $b(x)=c(s(x))$ is
$\mu$-measurable on $X_0$, including for completed measures.
Proposition~\ref{prop:rigidity} makes it a proper two-colouring of
$T\upharpoonright X_0$, contradicting Lemma~\ref{lem:parity}.
Replacing $\lambda$ by an equivalent probability changes neither the
completed measurable sets nor the null sets, and hence does not change
this argument.
\end{proof}

Finite average degree is obtained by changing density within the same
measure class; the obstruction itself depends only on that class.

\subsection{Invariant probabilities and the edge cocycle}

The invariant-probability hypothesis of \cite[Theorem~1]{CT21} cannot
hold for this connectedness relation, under any choice of probability.

\begin{proposition}\label{prop:no-invariant-probability}
The measure $\eta$ is infinite, and $E_G$ admits no invariant Borel
probability measure.
\end{proposition}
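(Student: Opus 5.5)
Two things need proving: $\eta(Y)=\infty$, and $E_G$ carries no invariant Borel probability measure.

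For the first, compute $\eta(Y)$ with \eqref{eq:eta}. Taking $A=Y$, the integrand is the number of vertices over $x$, namely $M_{k(x)}+\epsilon_{k(x)}$. Since $\mu\{k(x)=k\}=2^{-(k+1)}$, we get
\[
 \eta(Y)\ \ge\ \sum_{k}2^{-(k+1)}M_k.
\]
By \eqref{eq:mdef}, $M_k\ge 3^{k+1}/2-1/2$, so the terms grow like $(3/2)^k$ and the series diverges.

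For the second, suppose $\nu$ is an $E_G$-invariant Borel probability on $Y$. I would push it down to $X$ along the sheet $s(X)$, or better use all of $\pi$. Define a measure $\nu'$ on $X$ by $\nu'(A)=\nu(s(A))$. Each $\sigma_A$ lifts to a partial homeomorphism of the form \eqref{eq:partial-homeo} between subsets of the sheet $S_{0,0}=s(X)$, and $U_{0,0}=X$, so this lift is a full homeomorphism of that sheet with graph in $E_G$. Hence $\nu'$ is a finite $\Ezero$-invariant measure on $X$, that is, invariant under all $\sigma_A$.

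The key step is to show that invariance forces an exponential comparison between levels. Fix $x$ with $k(x)=k$. The fibre $Y_{(x,0)}$ has $M_k$ points, and the maps $(x,0,0)\mapsto(x,0,i)$ are partial homeomorphisms in the family \eqref{eq:partial-homeo} with $A=\emptyset$. So invariance gives
\[
 \nu\bigl(\{(x,0,i):k(x)=k\}\bigr)=\nu'(\{k=k\})
\]
for each $i<M_k$. Summing over $i$ yields
\[
 1\ \ge\ \sum_k M_k\,\nu'(\{k=k\}).
\]

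Next compare $\nu'(\{k=k\})$ across $k$. The map $\sigma_{\{0,\dots,k-1\}}$ composed with suitable flips gives a bijection between $\{k=k\}$ and $\{k=0,\ x(1)=\dots=x(k-1)\text{ arbitrary}, x(k)=1\}$. More cleanly, for $j<k$ the set $\{k(x)=k\}$ maps bijectively onto $\{k(x)=j,\ x(j+1..k-1)=0,\ x(k)=1\}$ by flipping coordinate $j$. Using all $2^k$ flips $\sigma_A$ with $A\subseteq\{0,\dots,k-1\}$, the set $\{x(k)=1\}$ splits into $2^k$ disjoint $\sigma$-images of $\{k=k\}$. Hence $\nu'(\{k=k\})=2^{-k}\nu'\{x(k)=1\}$.

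Similarly, flipping coordinate $k$ shows $\nu'\{x(k)=1\}=\nu'\{x(k)=0\}=\nu'(X)/2$. This uses $\sigma_k$ mapping $X$ to $X$, which holds. Therefore $\nu'(\{k=k\})=2^{-k-1}\nu'(X)$, and
\[
 \sum_k M_k2^{-k-1}\nu'(X)\le1
\]
forces $\nu'(X)=0$.

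It remains to conclude $\nu=0$. Every point of $Y$ lies over some $x$, and the fibre maps together with the leaf sheets show that $\nu$ of each sheet $S_{t,i}$ is bounded by $\nu'$ of its base. So $\nu(Y)=0$, which contradicts $\nu(Y)=1$.

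The main obstacle is the bookkeeping: showing the chosen maps are genuinely Borel bijections with graphs in $E_G$ between the stated sets, so that invariance applies. The divergence itself is the same series as in the computation of $\eta(Y)$.
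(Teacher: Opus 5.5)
Your proof is correct and follows the paper's route. You push $\nu$ down to $\nu'=\alpha$ on $X$ via $s$, use invariance under the flips $\sigma_A$ and under the sheet maps $(x,t,i)\mapsto s(x)$, and contradict the divergence of $\sum_k 2^{-k-1}M_k$. You evaluate $\nu'$ only on the cylinders $\{x:k(x)=k\}$ rather than identifying $\alpha=d\mu$ on all Borel sets. This avoids the generating-class step and merges the paper's cases $d=0$ and $d>0$ into the single inequality $\sum_k M_k\,\nu'(\{x:k(x)=k\})\le1$.

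One small correction: your first attempted bijection, onto $\{x:x(0)=1,\ x(k)=1\}$, is false as stated for $k\ge2$, since the two sets have $\mu$-measures $2^{-k-1}$ and $1/4$. The argument you actually use is correct: partition $\{x:x(k)=1\}$ into the $2^k$ images $\sigma_A(\{x:k(x)=k\})$ with $A\subseteq\{0,\dots,k-1\}$.
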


\begin{proof}
Since $\mu(k(x)=k)=2^{-k-1}$, we have
\[
 \eta(Y)\ge\int_X M_{k(x)}\,d\mu(x)
       =\sum_{k=0}^{\infty}2^{-k-1}M_k=\infty
\]
by \eqref{eq:mdef}.

Now suppose that $\nu$ is an invariant Borel probability measure for
$E_G$, and define the finite Borel measure $\alpha$ on $X$ by
$\alpha(A)=\nu(s(A))$. For each sheet, the map
$(x,t,i)\mapsto s(x)$ is a partial bijection in $E_G$. Invariance gives
\begin{equation}\label{eq:nu-sheet}
 \nu(\{(x,t,i):x\in A\})=\alpha(A)
 \qquad(A\subseteq U_{t,i}\text{ Borel}).
\end{equation}
If $\alpha(X)=0$, all the countably many sheets have $\nu$-measure
zero, a contradiction. Write $d=\alpha(X)>0$.

The bijections $s(x)\mapsto s(\sigma_Ax)$ show that $\alpha$ is
invariant under every finite coordinate flip. All length-$n$ cylinders
in $X$ therefore have $\alpha$-measure $d2^{-n}$. Since these
cylinders generate the Borel sets of $X$, it follows that
$\alpha=d\mu$. Summing \eqref{eq:nu-sheet} over all sheets yields
$\nu(Y)=d\eta(Y)=\infty$, again a contradiction.
\end{proof}

Propositions~\ref{prop:rigidity}, \ref{prop:structure},
\ref{prop:baire}, \ref{prop:measure}, and
\ref{prop:no-invariant-probability} prove Theorem~\ref{thm:main}.

Conley--Tamuz \cite[Theorem~2]{CT21} also obtain a Borel unfriendly
colouring on an invariant conull set for quasi-invariant probabilities
when degrees are bounded by $d$ and the edge cocycle lies in
$[1-1/d,1+1/d]$. Our example has neither bound. To check the latter,
put $h(y)=1/[m(r(y))(1+a(r(y)))]$, so $d\lambda=C^{-1}h\,d\eta$.
Invariance of $\eta$ gives the cocycle
$\rho(y,y')=h(y')/h(y)$, with the convention
$\lambda(f(A))=\int_A\rho(y,f(y))\,d\lambda(y)$ for partial bijections
in $E_G$. If the first two $1$'s of $x$ occur at $1<\ell$, then
\[
 \rho\bigl((x,0,0),(x,1,0)\bigr)=\frac{5(M_\ell+3)}{6}.
\]
These are edges of $G$, and each such cylinder has positive measure.
The cocycle is therefore essentially unbounded on edges.

\section{A positive theorem in maximum degree four}\label{sec:low-degree}

We now prove Theorem~\ref{thm:low-degree}. Throughout this section,
$H$ is a Borel graph of maximum degree at most four on a standard
Borel space $W$. Write
\[
 d(v)=\deg_H(v),\qquad q(v)=\lfloor d(v)/2\rfloor,
 \qquad L=\{v\in W:d(v)\le2\}.
\]
For a subgraph $J\subseteq H$ and a colouring $c$, let $s_c^J(v)$
and $o_c^J(v)$ denote the numbers of neighbours of $v$ in $J$ with,
respectively, the same colour and the opposite colour. The target
inequality on $H$ is $s_c^H(v)\le q(v)$.

\subsection{The core of finite certificates}

\begin{definition}
An \emph{admissible certificate} is a finite connected subgraph
$Q\subseteq H$ with at least one edge such that
\begin{equation}\label{eq:admissible-leaf}
 \deg_Q(v)=1\quad\Longrightarrow\quad d(v)\le2.
\end{equation}
The subgraph need not be induced. Define
\[
 K=\bigcup\{V(Q):Q\text{ is an admissible certificate}\},
 \qquad J=H\upharpoonright K.
\]
\end{definition}

Every cycle is a certificate, as is every nontrivial simple path
with both endpoints in $L$. This core differs from the one obtained
by iteratively deleting leaves: a tree of minimum degree three has
no admissible certificate.

\begin{lemma}\label{lem:certificate-core}
The set $K$ is Borel. In every component of $H$ that contains a
cycle or at least two vertices of $L$, the following hold:
\begin{enumerate}[label=\textup{(\roman*)},leftmargin=*]
\item The intersection with $K$ is nonempty and connected, and
  contains all vertices of $L$ in the component.
\item Every simple path between vertices of $K$ lies in $K$.
\item Every edge of $J$ in the component belongs to an admissible
  certificate contained in $J$.
\item Deleting $K$ leaves a forest, each of whose components is
  joined to $K$ by exactly one edge.
\end{enumerate}
\end{lemma}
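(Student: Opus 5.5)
Borelness comes first, by a counting argument. A finite connected subgraph is coded by a finite tuple of vertices together with a finite set of index pairs recording which pairs are edges of $Q$. Admissibility is a Borel condition on such tuples, because $H$ is Borel and locally finite, so $d(v)\le2$ is Borel. By Lusin--Novikov, $K$ is then the projection of a Borel set with countable sections, and so it is Borel. An alternative route is to write $K$ as a union over $n$ of the sets of vertices lying on a certificate with $n$ vertices. Each such set is Borel because $H$ is locally finite, so a Borel enumeration of the bounded-radius balls is available, for instance via a Borel proper colouring of a power of $H$ as in Kechris--Solecki--Todorcevic.

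For the combinatorial parts, fix a component $D$ containing a cycle or two vertices of $L$. The key tool is an \emph{extension move}. Let $Q$ be a certificate and $P$ a simple path with both endpoints in $V(Q)$ and interior outside $V(Q)$, or a path from $V(Q)$ to a vertex of $L$. Then $Q\cup P$ is again a certificate: every leaf of $Q\cup P$ either was a leaf of $Q$ or is an endpoint of $P$ lying in $L$. If the union of two certificates is connected, it is also a certificate, since a vertex of degree one in the union has degree one in whichever piece contains it.

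For (i), nonemptiness is immediate: a cycle is a certificate, and so is a path joining two vertices of $L$. To show that every $v\in L\cap D$ lies in $K$, start from a certificate $Q_0$ in $D$ and join $v$ to $Q_0$ by a shortest path; the extension move applies because $v\in L$. If $Q_0$ is a single path ending at $v$, the claim is trivial.

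For (ii), let $u,w\in K$ with certificates $Q_u,Q_w$, and let $P$ be a simple path from $u$ to $w$. First connect $Q_u$ and $Q_w$ by adding a path; the union is a certificate. Then add $P$ piece by piece: each maximal subpath of $P$ outside the current vertex set has both endpoints in it, so the extension move applies. Connectedness of $K\cap D$ follows from (ii).

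For (iii), an edge $e$ of $J$ joins $u,v\in K$. The path of length one is simple, so the proof of (ii) produces a certificate containing $e$. Every vertex of that certificate lies in $K$, so the certificate is contained in $J$.

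Part (iv) is the main step. Let $C$ be a component of $D\setminus K$. Suppose $C$ contains a cycle; that cycle is a certificate, so its vertices lie in $K$, which is absurd. Hence $C$ is a tree. Suppose $C$ is joined to $K$ by two distinct edges $xa$ and $yb$, with $x,y\in C$ and $a,b\in K$. If $a\ne b$, the path $a\,x\cdots y\,b$ through $C$ is simple with endpoints in $K$, so it lies in $K$ by (ii), a contradiction. If $a=b$ and $x\ne y$, the same path closes a cycle, again a contradiction. The case $a=b,\ x=y$ cannot occur because $H$ is simple. It remains to show that $C$ is joined to $K$ by at least one edge. This holds because $K\cap D$ is nonempty and $D$ is connected.

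The obstacle I expect is bookkeeping in the extension move, namely making sure that no new leaf of degree at least three is created. The argument only ever attaches paths whose new endpoints are existing vertices or vertices of $L$, and this should keep leaves admissible. Finiteness of $C$ is not needed for (iv).
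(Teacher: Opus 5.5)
Your proof is correct and follows the paper's argument. Borelness comes from Lusin--Novikov applied to finite witnesses, (i)--(iii) come from leaf bookkeeping for unions of certificates with paths, and (iv) rules out exterior cycles and treats two attaching edges via (ii) or a cycle. The only difference is in (ii), where the paper takes $Q_u\cup P\cup Q_w$ in one step, since interior vertices of $P$ have degree at least two and every other vertex lies in some certificate; your preliminary connecting path and piece-by-piece addition are harmless but unnecessary.
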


\begin{proof}
Membership of $v$ in $K$ is witnessed by a finite subgraph inside
some ball $B_H(v,n)$. Lusin--Novikov gives Borel enumerations of
these finite balls and their finite subgraphs; see \cite{KM20}.
The certificate conditions are Borel, so taking the union over
radii and witnesses shows that $K$ is Borel.

If the component contains a cycle, that cycle is a certificate.
A point of $L$ outside the cycle can be joined to it by a simple path
stopped at its first meeting with the cycle; the union is another
certificate. If the component contains two distinct points of $L$,
a path between them is a certificate, and any other point of $L$
in the component can likewise be joined to one of them.

Suppose $u,v\in K$ lie in the same component. Choose certificates
$Q_u,Q_v$ containing them and any simple path $P$ from $u$ to $v$. The union
$Q_u\cup P\cup Q_v$ is finite and connected. Every new interior
vertex of $P$ has degree at least two in this union. Any leaf of the
union was already a leaf of one of the certificates, and hence lies
in $L$. The union is therefore admissible. This proves (ii) and the
connectedness in (i). Taking $P$ to be an edge proves (iii).

There can be no cycle outside $K$. Each exterior component has an
edge to $K$, since the original component is connected and meets $K$. Two
such edges with distinct endpoints in $K$ would give a simple path
between those endpoints through the exterior tree, contrary to (ii).
Two with the same endpoint in $K$ would give a cycle through the
exterior tree. Both are impossible, proving (iv).
\end{proof}

\subsection{Stabilization from monochromatic-edge counts}

We use local-improvement dynamics as in Conley--Tamuz \cite{CT21}.
The convergence argument uses a family of finite counters. We will
show that every admissible certificate has a nonincreasing number of
monochromatic edges, and that, for each vertex, one certificate records
a strict decrease at every flip of that vertex. These two properties
give pointwise stabilization without a summable potential on a component.

On the induced core $J$ we seek the inequality
\begin{equation}\label{eq:core-quota}
 s_c^J(v)\le q(v)\qquad(v\in K),
\end{equation}
where the quota still uses the original degree in $H$. All edges
outside the core will subsequently be made bichromatic.

Choose a countable Borel proper colouring $\gamma:K\to\N$ of $J$,
which exists for every locally finite Borel graph \cite{KM20}, and
a sequence $(i_n)_{n\in\N}$ in which every integer occurs infinitely
often. Start with $c_0=0$ on $K$. At stage $n$, flip precisely the
vertices in the Borel set
\[
 B_n=\{v\in K:\gamma(v)=i_n,\ s_{c_n}^J(v)>q(v)\}.
\]
This defines a sequence of Borel colourings $(c_n)$; each set $B_n$
is independent in $J$.

\begin{lemma}\label{lem:certificate-stabilization}
Every vertex of $K$ changes colour only finitely often in this
process. The pointwise limit is Borel and satisfies
\eqref{eq:core-quota}.
\end{lemma}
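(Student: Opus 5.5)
The plan is to attach to each admissible certificate $Q\subseteq J$ the counter $\mu_n(Q)$, the number of edges of $Q$ that are monochromatic under $c_n$. I will show that $\mu_n(Q)$ never increases, and that every vertex $v\in K$ has one certificate $Q_v$ whose counter drops strictly at each stage where $v$ flips. Since a counter is a nonnegative integer, $v$ can then flip only finitely often.

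\emph{Step 1: reduction to one flipping vertex.} Since $B_n$ is independent in $J$, every edge of $Q$ has at most one endpoint in $B_n$. The change $\mu_{n+1}(Q)-\mu_n(Q)$ is therefore the sum, over $v\in B_n\cap V(Q)$, of $o^Q_{c_n}(v)-s^Q_{c_n}(v)$. It suffices to show $o^Q(v)\le s^Q(v)$ whenever $v\in V(Q)$ is flipped, i.e.\ whenever $s^J(v)\ge q(v)+1$.

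\emph{Step 2: the local inequality.} This is the place where maximum degree four and condition \eqref{eq:admissible-leaf} are used. From $s^J(v)\ge\lfloor d(v)/2\rfloor+1$ and $\deg_J(v)\le d(v)$ we get
\[
o^Q(v)\le o^J(v)\le d(v)-s^J(v)\le\lceil d(v)/2\rceil-1\le1 .
\]
I split on $\deg_Q(v)$.
\begin{itemize}
\item If $\deg_Q(v)\ge2$, then $s^Q(v)=\deg_Q(v)-o^Q(v)\ge\deg_Q(v)-1\ge o^Q(v)$.
\item If $\deg_Q(v)=1$, then $d(v)\le2$ by admissibility, so $q(v)\le1$. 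Hence $o^J(v)\le d(v)-(q(v)+1)=0$, and the single $Q$-edge at $v$ is monochromatic.
\end{itemize}
In both cases the counter does not increase.

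\emph{Step 3: strict decrease.} For $v\in K$, Lemma~\ref{lem:certificate-core}\textup{(iii)} gives, for each $J$-edge at $v$, an admissible certificate in $J$ containing that edge. Let $Q_v$ be the union of these finitely many certificates (there are at most four). It is finite, and it is connected through $v$. Every leaf of $Q_v$ is a leaf of one of the pieces, so it lies in $L$, and $Q_v$ is admissible. Moreover $\deg_{Q_v}(v)=\deg_J(v)$. When $v$ flips,
\[
s^{Q_v}(v)=s^J(v)>d(v)/2\ge\deg_J(v)/2 ,
\]
so $o^{Q_v}(v)<s^{Q_v}(v)$ and $\mu(Q_v)$ strictly decreases. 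Combined with Step 1, each vertex flips finitely often.

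\emph{Step 4: the limit.} Each $c_n$ is Borel: $B_n$ is Borel because $s^J_{c_n}$ is computed from finitely many neighbours through Lusin--Novikov enumerations. The pointwise limit $c$ therefore exists and is Borel. Fix $v\in K$. Its finitely many $J$-neighbours and $v$ itself stabilize after some stage $N$. Choose $n\ge N$ with $i_n=\gamma(v)$, which exists because every integer occurs infinitely often. At that stage $v\notin B_n$, so
\[
s^J_c(v)=s^J_{c_n}(v)\le q(v),
\]
which is \eqref{eq:core-quota}.

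I expect Step 2 to be the main obstacle. It is the only place where the degree bound four enters: for $d(v)\ge5$ one could have $o^Q(v)\ge2$ with $\deg_Q(v)=2$, and monotonicity would fail.
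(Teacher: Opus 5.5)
Your proof is correct and follows the paper's argument essentially step for step. The steps are: nonincreasing monochromatic-edge counts on admissible certificates (your case split on $\deg_Q(v)$ instead of $d(v)$ is only cosmetic); the union $Q_v$ of certificates covering the $J$-star of $v$, which forces a strict drop at each flip of $v$; and the stabilize-then-revisit argument for \eqref{eq:core-quota}. The one detail the paper makes explicit and you skip is that every component meeting $K$ contains a cycle or two vertices of $L$, since a certificate is either cyclic or a finite tree with at least two leaves in $L$; this is what lets you apply Lemma~\ref{lem:certificate-core}\textup{(iii)} at $v$.
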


\begin{proof}
For an admissible certificate $Q$, let $M_Q(c)$ be its number of
monochromatic edges. Flipping a single vertex $v\in V(Q)$ changes
this count by
\begin{equation}\label{eq:certificate-drop}
 M_Q(c)-M_Q(c^v)=\deg_Q(v)-2o_c^Q(v).
\end{equation}
Here $c^v$ denotes the colouring obtained by flipping $v$.

Suppose that $v$ is eligible for a flip. If $d(v)\le2$, then
$s_c^J(v)>\lfloor d(v)/2\rfloor$ forces $o_c^J(v)=0$:
for $d(v)=1$ the one available edge must be monochromatic, and for
$d(v)=2$ both edges must be present in $J$ and monochromatic.
Degree zero cannot occur in a certificate. Thus
\eqref{eq:certificate-drop} is positive in this case, including
when $v$ is a leaf of $Q$.

If $d(v)\in\{3,4\}$, admissibility gives $\deg_Q(v)\ge2$, while
\[
 o_c^J(v)\le\deg_J(v)-q(v)-1
             \le d(v)-\lfloor d(v)/2\rfloor-1=1.
\]
Hence \eqref{eq:certificate-drop} is nonnegative. Flips outside
$V(Q)$ do not change $M_Q$. Since the simultaneous flip centres
are independent, no edge is incident to two of them, so their
changes add. It follows that $M_Q(c_n)$ is a nonincreasing sequence
of nonnegative integers for every certificate $Q$.

Fix $v\in K$. Every component meeting $K$ contains a cycle or at
least two points of $L$: an admissible certificate is either cyclic
or a finite nontrivial tree with at least two leaves in $L$.
By Lemma~\ref{lem:certificate-core}, each edge of the finite
$J$-star of $v$ belongs to a certificate. The union of these finitely
many certificates is again connected and admissible, and contains
the whole star. Denote this union by $Q_v$. Whenever $v$ flips,
its contribution to the decrease of $M_{Q_v}$ is
\begin{equation}\label{eq:strict-certificate-drop}
 2s_c^J(v)-\deg_J(v)
 \ge 2\lfloor d(v)/2\rfloor+2-\deg_J(v)\ge1.
\end{equation}
All other contributions are nonnegative. Consequently, $v$ flips
at most $M_{Q_v}(c_0)=|E(Q_v)|$ times. The certificate $Q_v$ is
only a witness for convergence; the process never chooses such
witnesses Borelly.

The pointwise limit $c_K$ is Borel. If
\eqref{eq:core-quota} failed at $v$, then after every vertex of its
finite closed $J$-neighbourhood had stabilized, the next visit to
the class $\gamma^{-1}(\gamma(v))$ would flip $v$ again. This is a
contradiction.
\end{proof}

\subsection{Extension to the attached trees}

\begin{proof}[Proof of Theorem~\ref{thm:low-degree}]
First consider the components containing a cycle or at least two
vertices of $L$. These components form a Borel invariant set:
each of the defining properties has a finite witness in a ball.
Use Lemma~\ref{lem:certificate-stabilization} to colour their core.
For each exterior vertex $x$, Lemma~\ref{lem:certificate-core}
gives a unique closest vertex $\att(x)\in K$, namely the attachment
point of its exterior tree. Both $\att(x)$ and the finite distance
$\ell(x)=d_H(x,K)$ are Borel; for $\att$ one can again use
Lusin--Novikov, with singleton sections. Define
\[
 c(x)=
 \begin{cases}
 c_K(x),&x\in K,\\
 c_K(\att(x))+\ell(x)\pmod2,&x\notin K.
 \end{cases}
\]
Every edge with at least one endpoint outside $K$ is bichromatic.
Thus $s_c^H(v)=s_{c_K}^J(v)\le q(v)$ in $K$, and
$s_c^H(v)=0$ outside $K$.

Under the hypothesis of the theorem, the remaining components
are trees containing exactly one vertex of $L$. They too form a
Borel invariant set. The unique vertex $\rt(x)\in L$ in the component
of $x$ depends Borelly on $x$. Colouring $x$ by
$d_H(x,\rt(x))\pmod2$ is proper and Borel, including on isolated
vertices. Combining the two invariant parts completes the proof.
\end{proof}

\begin{corollary}\label{cor:forest-reduction}
For a Borel graph $H$ of maximum degree at most four, let
\[
 R=\{x\in W:[x]_{E_H}\text{ is a tree all of whose vertices
                         have degree at least three}\}.
\]
Then $R$ is Borel and invariant, and every Borel unfriendly
colouring of $H\upharpoonright R$ extends to a Borel unfriendly
colouring of $H$. In particular:
\begin{enumerate}[label=\textup{(\roman*)},leftmargin=*]
\item Every Borel graph of maximum degree at most three has a Borel
  unfriendly colouring if and only if every cubic Borel forest does.
\item Every Borel graph of maximum degree at most four has a Borel
  unfriendly colouring if and only if every Borel forest with
  degrees in $\{3,4\}$ does.
\end{enumerate}
\end{corollary}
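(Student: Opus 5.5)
To prove the corollary I would split $W$ into $R$ and its complement, colour the complement by Theorem~\ref{thm:low-degree}, and take the given colouring on $R$.

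First, Borelness and invariance of $R$. Invariance is clear, since the defining property depends only on the component. For Borelness I would write the complement of $R$ as the set of $x$ whose component contains a cycle or a vertex of degree at most two. Each of these has a finite witness in some ball $B_H(x,n)$. By Lusin--Novikov, as in the proof of Lemma~\ref{lem:certificate-core}, the set of $x$ admitting such a witness within radius $n$ is Borel, and $W\setminus R$ is the countable union over $n$. Thus $R$ is Borel, and $W\setminus R$ is an invariant Borel set.

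Next, the extension. The induced graph $H\upharpoonright(W\setminus R)$ is a Borel graph of maximum degree at most four on a standard Borel space. Because $W\setminus R$ is a union of components, the degrees in $H\upharpoonright(W\setminus R)$ are the original degrees, and each of its components contains a cycle or a vertex of degree at most two. Theorem~\ref{thm:low-degree} therefore gives a Borel unfriendly colouring $c'$ of $H\upharpoonright(W\setminus R)$. Given a Borel unfriendly colouring $c_R$ of $H\upharpoonright R$, I set $c=c_R\cup c'$. This map is Borel, being a union of Borel maps on complementary Borel pieces. Every neighbourhood in $H$ lies inside a single piece, so inequality \eqref{eq:unfriendly} at each vertex is exactly the inequality already guaranteed by $c_R$ or by $c'$.

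For (i) and (ii), the forward directions are trivial. For the converses, $H\upharpoonright R$ is a Borel forest. In maximum degree three, every vertex of $R$ has degree exactly three, so $H\upharpoonright R$ is a cubic Borel forest. In maximum degree four, its degrees lie in $\{3,4\}$. The hypothesis then supplies a Borel unfriendly colouring of $H\upharpoonright R$, and the extension above finishes the argument.

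There is one small point to note. A forest with degrees in $\{3,4\}$ may have no vertex of degree four, so ``maximum degree at most four'' is the right ambient class for (ii). The only step with any substance is the Borelness of $R$, which is a routine Lusin--Novikov computation.
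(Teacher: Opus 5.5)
Your proposal is correct and follows the paper's proof: $W\setminus R$ is the invariant Borel set of components containing a cycle or a vertex of degree at most two, Theorem~\ref{thm:low-degree} colours it, and because no edge joins the two invariant parts this colouring combines with any Borel solution on $R$. Parts (i) and (ii) then follow from the degree constraints on $R$ together with inclusion of classes; you spell out the Lusin--Novikov finite-witness argument for Borelness, which the paper only invokes in the proof of Theorem~\ref{thm:low-degree}.
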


\begin{proof}
The complement of $R$ is the invariant Borel set of components
containing a cycle or a vertex of degree at most two.
Theorem~\ref{thm:low-degree} supplies a Borel unfriendly colouring
there. No edge joins the two invariant parts, so this colouring
can be combined with any prescribed solution on $R$. The two
equivalences follow by inspecting the degrees on $R$; the reverse
implications follow by inclusion of classes.
\end{proof}

\subsection{Baire and measurable consequences}

\begin{proof}[Proof of Corollary~\ref{cor:low-degree-baire}]
Let $W$ carry the given Polish topology and let $R$ be as in
Corollary~\ref{cor:forest-reduction}. Theorem~\ref{thm:low-degree}
colours $W\setminus R$ Borelly. To apply Conley--Marks--Unger
\cite[Theorem~1.7]{CMU20} on the remaining forest without changing
the topology, define an auxiliary graph $\widehat H$ on the Polish
space $W\times\mathbb Z$ as follows.

Give $\mathbb Z$ the discrete topology and fix a bijection
$b:\mathbb Z\setminus\{0\}\to\mathbb Z$.
On $R\times\{0\}$ use a copy of $H\upharpoonright R$.
For each $x\in R$, put a bi-infinite path on
$\{x\}\times(\mathbb Z\setminus\{0\})$, joining $(x,i)$ to
$(x,j)$ when $|b(i)-b(j)|=1$.
For each $x\notin R$, put the usual bi-infinite path on
$\{x\}\times\mathbb Z$. Add no other edges.
The resulting graph is Borel and acyclic, with every degree at
least two and at most four.

The cited theorem gives a Baire measurable strongly unfriendly
colouring of $\widehat H$. Its restriction to the clopen slice
$W\times\{0\}$ gives a Baire measurable map $f:W\to\two$. On $R$ it is strongly
unfriendly for $H$, hence unfriendly since all degrees there are
at least three. On the Borel set $W\setminus R$, replace $f$
by the Borel solution already constructed. The resulting map is
Baire measurable in the original topology and is unfriendly
everywhere.
\end{proof}

The measurable part of the same theorem yields the following consequence.
Recall that $H$ is \emph{$\nu$-hyperfinite} if its connectedness relation
is hyperfinite on an invariant conull Borel set.

\begin{corollary}\label{cor:low-degree-measure}
Let $H$ be a Borel graph of maximum degree at most four on a standard
Borel space $W$, and let $\nu$ be a Borel probability measure on $W$.
If $H$ is $\nu$-hyperfinite, then it admits a $\nu$-measurable unfriendly
colouring.
\end{corollary}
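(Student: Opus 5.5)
We follow the proof of Corollary~\ref{cor:low-degree-baire}, replacing the category half of \cite[Theorem~1.7]{CMU20} with its measurable half. That half gives measurable strongly unfriendly colourings of locally finite acyclic Borel graphs of minimum degree at least two, provided the graph is measure-hyperfinite.

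First, by Corollary~\ref{cor:forest-reduction} and Theorem~\ref{thm:low-degree}, the invariant Borel set $W\setminus R$ already carries a Borel unfriendly colouring. It therefore suffices to produce a $\nu$-measurable unfriendly colouring of $H\upharpoonright R$. If $\nu(R)=0$, any colouring of $R$ works. Otherwise we may restrict attention to $R$.

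Second, we use the same auxiliary graph $\widehat H$ on $W\times\mathbb Z$ as in the proof of Corollary~\ref{cor:low-degree-baire}. Its slice $R\times\{0\}$ is a copy of $H\upharpoonright R$. Every other vertex lies on a bi-infinite path, either attached to nothing or forming its own component. Thus $\widehat H$ is a Borel acyclic graph with degrees between two and four. We equip $W\times\mathbb Z$ with the probability
\[
 \widehat\nu=\sum_{n\in\mathbb Z}2^{-|n|-2}\,(\nu\otimes\delta_n),
\]
suitably normalized. Each slice then has measure comparable to $\nu$, and the slice $W\times\{0\}$ pulls back to a positive multiple of $\nu$.

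Third, we check that $\widehat H$ is $\widehat\nu$-hyperfinite. Let $W_0$ be an invariant $\nu$-conull Borel set on which $E_H$ is hyperfinite, say $E_H=\bigcup_n F_n$ with each $F_n$ having finite classes. On $W_0\times\mathbb Z$, the components of $\widehat H$ are of two kinds. The first kind is a copy of an $H$-component inside $R$, and it is hyperfinite via the copies of $F_n$. The second kind is a single path $\{x\}\times I$, and it is hyperfinite via intervals of length $n$ centred at $0$ in the enumeration. Taking unions of these two witnessing sequences gives finite Borel equivalence relations increasing to $E_{\widehat H}$ on the invariant set $W_0\times\mathbb Z$. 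That set is $\widehat\nu$-conull.

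We then apply the measurable part of \cite[Theorem~1.7]{CMU20} to $(\widehat H,\widehat\nu)$. This gives a $\widehat\nu$-measurable strongly unfriendly colouring. Its restriction $f$ to the slice $W\times\{0\}$ is $\nu$-measurable. On $R$, every degree is at least three, so being strongly unfriendly implies being unfriendly. Gluing $f\upharpoonright R$ with the Borel colouring on $W\setminus R$ gives the required $\nu$-measurable unfriendly colouring everywhere.

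The main point to verify is the exact form of the hypothesis in \cite{CMU20}. If it demands a quasi-invariant measure rather than an arbitrary probability, we first replace $\nu$ by an equivalent-class-saturating measure, for instance $\sum_k 2^{-k-1}(g_k)_*\nu$ over a Feldman--Moore generating family $(g_k)$. This measure is quasi-invariant, makes the null-saturation of $\nu$-null sets null, and its measurable sets are $\nu$-measurable. Hyperfiniteness transfers along an invariant conull set as well.
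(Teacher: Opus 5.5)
Your proof is correct, but it takes a longer route than the paper. The auxiliary graph $\widehat H$ on $W\times\mathbb Z$ was introduced in Corollary~\ref{cor:low-degree-baire} only so that \cite[Theorem~1.7]{CMU20} could be applied without changing the topology of $W$, because the Baire property depends on the topology. Measurability does not, so the paper skips the construction. When $\nu(R)>0$, it gives $R$ any compatible Polish topology, normalizes $\nu\upharpoonright R$, and applies the measurable half of the theorem directly to $H\upharpoonright R$. That forest is already acyclic, has minimum degree at least three, and is hyperfinite on the invariant conull set $W_0\cap R$.

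Your detour costs a product measure and a separate hyperfiniteness check for $\widehat H$, and you carry out both correctly. In exchange, $\widehat\nu$ is a probability whatever $\nu(R)$ is, so your case split on $\nu(R)$ is actually superfluous. In that split, ``any colouring of $R$ works'' should read ``any unfriendly colouring of $H\upharpoonright R$ works''. Such a colouring exists by the compactness argument of \cite{AMP90}, and it is measurable for the completed measure.

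Your closing fallback with $\sum_k 2^{-k-1}(g_k)_*\nu$ is also sound. The identity belongs to the Feldman--Moore group, so $\nu$ is absolutely continuous with respect to the new measure. The paper does not invoke this safeguard.
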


\begin{proof}
Let $R$ be the invariant Borel set in
Corollary~\ref{cor:forest-reduction}. On $W\setminus R$, use the Borel
colouring supplied by Theorem~\ref{thm:low-degree}. If $\nu(R)>0$, equip
$R$ with a compatible Polish topology and normalize
$\nu\upharpoonright R$ to a probability. The forest
$H\upharpoonright R$ remains hyperfinite on an invariant conull Borel
set and has minimum degree at least three. Conley--Marks--Unger
\cite[Theorem~1.7]{CMU20} therefore give a measurable strongly unfriendly
colouring there, which is unfriendly. If $\nu(R)=0$, choose an arbitrary
unfriendly colouring on $R$; it is measurable for the completed measure.
Combining the two invariant parts gives the required colouring on $W$.
\end{proof}

\section{Scope and approximation}\label{sec:scope}

Unfriendly colouring is also studied in distributed computing under
the name \emph{locally optimal cut}; Balliu et al.\ \cite{BBD+25} give
polylogarithmic-round algorithms for bounded-degree finite graphs.
For connections between distributed algorithms and Borel or measurable
combinatorics, see Bernshteyn \cite{Be23} and, for regular trees,
Brandt et al.\ \cite{BCG+22}.

\subsection{The role of the degree bound}

Theorem~\ref{thm:main} shows that hyperfiniteness alone does not extend
Corollary~\ref{cor:low-degree-measure} to all locally finite graphs,
and that bipartiteness cannot replace acyclicity in
\cite[Theorem~1.7]{CMU20}. The replacement graph contains $4$-cycles
between adjacent fibres of size at least two.

The bound four ensures that a vertex eligible to flip has at most one
opposite-colour neighbour in the core. In degree five, it may have two;
a certificate using only those two incident edges would gain
monochromatic edges after the flip. The argument therefore does not
extend directly, but establishes no sharp degree threshold.

Conversely, the counterexample needs unbounded fibre sizes by
\eqref{eq:mass}. Attaching leaves cannot replace these weights:
each leaf opposes its attachment vertex and thus relaxes the
constraint on its other neighbours.

The bounded-degree Borel and measurable questions remain open
\cite[Problem~9.7]{GV25}, with the Borel forest reduction given by
Corollary~\ref{cor:forest-reduction}. It also remains open whether
finite average degree is necessary in \cite[Theorem~1]{CT21}:
our obstruction admits no invariant probability.

\subsection{Approximate solutions}

\begin{remark}[Approximation without a solution]\label{rem:approximation}
The continuous colourings
$c_n(x,t,i)=(t+\sum_{j<n}x(j))\bmod2$ satisfy
\[
 \Bad_G(c_n)=\{(x,0,i)\in Y:k(x)\ge n\}.
\]
Indeed, leaf edges are always proper. If $k(x)<n$, every incident base
edge flips a coordinate below $n$, so all incident edges are proper.
If $k(x)\ge n$, the parent fibre has the same colour and supplies a
strict majority by \eqref{eq:dominance}. These clopen failure sets
decrease to the empty set, and every vertex eventually has all its
incident edges properly coloured. Hence
$\theta(\Bad_G(c_n))\to0$ for every Borel probability $\theta$ on $Y$.
For $\lambda$, the infimum of the failure measures of Borel colourings
is zero, but no Borel colouring attains it. There is no pointwise limit
of $(c_n)$: each successive $1$ in a sequence changes its colour.
\end{remark}

\section*{Use of AI tools}

The author used AI tools from Anthropic (Claude) and OpenAI (Codex) to check computations,
locate and verify references, conduct exploratory finite experiments, and
write the final version of this text. The proofs do not rely on these exploratory
computations. The author independently verified all AI-generated output and all references,
made the final mathematical and editorial decisions, and takes
full responsibility for the accuracy, integrity, and originality of this article.

\end{document}